\newcommand{\midarrow}{\tikz \draw[red, -stealth'] (0,0) -- +(.2,0);}
\tikzset{vertex/.style={circle,draw,fill,inner sep=0pt,minimum size=1mm}}
\newtheorem{theorem}{Theorem}
\newtheorem{corollary}[theorem]{Corollary}
\newtheorem{definition}[theorem]{Definition}
\newtheorem{example}[theorem]{Example}
\newtheorem{lemma}[theorem]{Lemma}
\newtheorem{proposition}[theorem]{Proposition}
\newtheorem{remark}[theorem]{Remark}
\numberwithin{equation}{section}
\DeclareMathOperator{\tr}{tr}
\DeclareMathOperator{\Ima}{Im}
\DeclareMathOperator{\Ker}{Ker}
\newcommand{\AFix}{\mathrm{AFix}}
\mathchardef\mhyphen="2D
\newcommand{\nAFix}[1]{#1\mhyphen\AFix}
\newcommand{\Z}{\mathbb Z}
\begin{document}

\title{Lefschetz numbers and fixed point theory in digital topology}

\author{Muhammad Sirajo Abdullahi}

\curraddr[]{\textit{KMUTTFixed Point Research Laboratory},  KMUTT-Fixed Point Theory and Applications Research Group, SCL 802 Fixed Point Laboratory, Department of Mathematics, Faculty of Science, King Mongkut's University of Technology Thonburi (KMUTT), 126 Pracha-Uthit Road, Bang Mod, Thrung Khru, Bangkok 10140, Thailand}
\email{abdullahi.sirajo@udusok.edu.ng [M.S. Abdullahi]}
\email{poom.kumam@mail.kmutt.ac.th [P. Kumam]}

\author{Poom Kumam} 
\curraddr{Center of Excellence in \textit{Theoretical and Computational Science (TaCS-CoE)},  Science Laboratory Building, King Mongkut's University of Technology Thonburi (KMUTT), 126 Pracha-Uthit Road, Bang Mod, Thrung Khru, Bangkok 10140, Thailand}
\email{poom.kumam@mail.kmutt.ac.th [P. Kumam]}

\author{P. Christopher Staecker}
\curraddr{Department of Mathematics, Fairfield University, Fairfield, CT 06823-5195, USA}
\email{cstaecker@fairfield.edu [P.C. Staecker]}

\address[]{Department of Mathematics, Faculty of Science, Usmanu Danfodiyo University, Sokoto, Nigeria}
\email{abdullahi.sirajo@udusok.edu.ng [M.S. Abdullahi]}


\thanks{The first author was supported by the ``Petchra Pra Jom Klao Ph.D. Research Scholarship from King Mongkut's University of Technology Thonburi".}




\subjclass[2010]{Primary 55M20, 55N10; Secondary 54C56, 68R10, 68U10}

\keywords{Approximate fixed points, digital topology, digital homology, fixed points, Lefschetz number}

\begin{abstract}
	In this paper, we present two types of Lefschetz numbers in the topology of digital images. Namely, the simplicial Lefschetz number $L(f)$ and the cubical Lefschetz number $\bar L(f)$. We show that $L(f)$ is a strong homotopy invariant and has an approximate fixed point theorem. On the other hand, we establish that $\bar L(f)$ is a homotopy invariant and has an $n$-approximate fixed point result. In essence, this means that the fixed point result for $L(f)$ is better than that for $\bar L(f)$ while the homotopy invariance of $\bar L(f)$ is better than that of $L(f)$. Unlike in classical topology, these Lefschetz numbers give lower bounds for the number of approximate fixed points. Finally, we construct some illustrative examples to demonstrate our results.
\end{abstract}

\maketitle

\section{Introduction}
The theory of fixed points plays a major role in many areas of mathematical research including functional analysis and applied topology. The Banach fixed point theorem \cite{Banach} is the pioneer result from the perspective of metric spaces, which guarantees not only the existence and uniqueness of a fixed point of a certain self-map $f$ but also provides a constructive method of finding such a fixed point. This inspired many researchers to focus on improvements or generalizations in many different directions (see \cite{AbdullahiAzam17A,AbdullahiPoom18,Azametal09,Nadler,SintuKumam11} and others).

In topological perspectives,  the Brouwer fixed point theorem \cite{Brouwer11abbildung}, the Lefschetz fixed point theorem \cite{Lefschetz26intersections} and the Nielsen fixed point theory \cite{Nielsen27untersuchungen} are the main fundamental fixed point results. These theorems tell us some information regarding the existence and number of fixed points of some certain self-map $f$, and the fixed points of any map $g$ homotopic to $f$ in a topological space $X$. 

Remember that in classical topology, we say that a space $X$ has the fixed point property (FPP, for short) if every continuous self-map $f$ on $X$ has a fixed point. A similar definition appeared in digital topology:
\begin{definition} \cite{Rosen86continuous}
	A digital image $(X, \kappa)$ has the FPP if every $\kappa$-continuous $f$ on $X$ has a fixed point.
\end{definition}

However, the FPP is essentially a vacuous property, because the digital image of a single point is the only digital image with the FPP as was shown below:
\begin{theorem}\cite{Boxeretal16digital}
	A digital image $(X, \kappa)$ has the FPP if and only if $\#X = 1.$
\end{theorem}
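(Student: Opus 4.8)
The plan is to establish the two implications separately, treating the non-trivial direction by contraposition. The forward implication is immediate: if $\#X = 1$, write $X = \{x_0\}$; the only self-map of $X$ sends $x_0$ to itself, it is vacuously $\kappa$-continuous, and it fixes $x_0$. Hence $X$ has the FPP.

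For the converse I would prove the contrapositive: given $\#X \ge 2$, I would exhibit a single $\kappa$-continuous $f : X \to X$ with $f(x) \ne x$ for every $x$, so that $X$ fails the FPP. The construction splits according to whether $(X,\kappa)$ is $\kappa$-connected. If $X$ is disconnected, let $A$ be one $\kappa$-component and $B = X \setminus A$, both nonempty with no $\kappa$-adjacencies between them; fixing $a \in A$ and $b \in B$, I would define $f(x) = b$ for $x \in A$ and $f(x) = a$ for $x \in B$. If $X$ is connected, then since $\#X \ge 2$ there is an edge, i.e.\ two $\kappa$-adjacent points $a \ne b$; here I would define $f(a) = b$ and $f(x) = a$ for all $x \ne a$.

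In each case I would then verify the two required properties. For fixed-point-freeness: in the disconnected construction every point of $A$ is sent into $B$ and vice versa, so no point is fixed; in the connected construction $f(a) = b \ne a$, while every other point is sent to $a$, which differs from it. For $\kappa$-continuity I would check that $\kappa$-adjacent points have equal or $\kappa$-adjacent images: in the disconnected case adjacent points lie in a common component and receive equal images, while in the connected case the only non-constant behaviour occurs on edges incident to $a$, where the images are the $\kappa$-adjacent pair $b$ and $a$.

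The main obstacle is the connected case. A constant map is always $\kappa$-continuous but always has its constant value as a fixed point, so some genuine movement of points is forced, and this must be done without destroying adjacency. The resolution is the ``almost constant'' map above, whose $\kappa$-continuity rests \emph{precisely} on the adjacency of the chosen pair $a, b$; this is exactly the step where the existence of an edge, guaranteed by connectedness together with $\#X \ge 2$, is used.
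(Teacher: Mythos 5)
Your proof is correct and complete. Note that the paper itself gives no proof of this statement --- it is quoted from \cite{Boxeretal16digital} --- and your argument is essentially the standard one from that reference: the trivial direction for $\#X=1$, and for $\#X\ge 2$ a fixed-point-free continuous map built either by swapping two pieces across a disconnection or, in the connected case, by the ``almost constant'' map $f(a)=b$, $f(x)=a$ for $x\ne a$, whose continuity rests exactly on the adjacency $a\leftrightarrow b$ guaranteed by connectedness together with $\#X\ge 2$. Your case split is exhaustive and both verifications (continuity and fixed-point-freeness) are carried out correctly.
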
 
Digital topology, though not technically a topological theory in the classical sense, has provided a theoretical formulation of numerous image processing operations like image thinning and segmentation, boundary detection and computer graphics etc (see \cite{Bertrand94simple,KongRosen96topological}). This study was first considered in the 1970s by Rosenfeld \cite{Rosen79digital}. Since then, many researchers have tried to understand whether digital images have similar properties to the continuous analogues they represent. The main goal is to develop some kind of theory for digital images which is similar to the theory of topological spaces. However, due to the discrete and combinatorial nature of digital images, it is often difficult to get results that agree with their counterparts in classical topology. Many concepts in classical and algebraic topology have been developed successfully for digital images. Simplicial homology groups were not left behind as they were first studied in digital topology in \cite{Arslanetal08homology,Boxeretal11topological}. Later, digital cubical homology groups were also established by micmicking the cubical homology groups of topological spaces in algebraic topology \cite{JamilAli19digital,Staecker20digital}. Unlike in classical topology, the digital simplicial and cubical homology give two non-isomorphic homology theories associated to a digital image.

In the paper \cite{EgeKaraca13lefschetz}, Ege \& Karaca tried to give a digital analogue of the classical Lefschetz fixed point theory. However, there were errors in their main results, which were eventually retracted by the same authors with others in \cite{Boxeretal16digital}.
The aim of our paper is to give a correct approach to the topic, using the digital simplicial and cubical homology theories to define homotopy invariant Lefschetz numbers for any continuous functions on digital images. In the case of simplicial homology, we use the same definition of the Lefschetz number given in \cite{EgeKaraca13lefschetz}, but our fixed point theorem and homotopy invariance are necessarily different from the incorrect ones stated in that paper.

The rest of this manuscript is organized in the following manner: Section 2 gives a brief review of the basic background required in the subsequent sections. In Section 3, we recall the simplicial Lefschetz number and study its strong homotopic invariance property. Moreover, we present a $1$-approximate fixed point existence theorem. In Section 4, we introduce the cubical Lefschetz number and study its homotopic invariance property, we also establish the existence of an $n$-approximate fixed point theorem. In Section 5 we demonstrate a few more interesting properties of the Lefschetz numbers. We then end the paper with our concluding remarks.

\section{Preliminaries}
A \emph{digital image} is a pair $(X, \kappa),$ where $X$ is a set and $\kappa$ is a symmetric and antireflexive relation on $X$ called the \emph{adjacency relation}. A digital image $(X, \kappa)$ can be viewed as a graph for which $X$ is the vertex set and $\kappa$ determines the edge set. For all examples in this paper, $X$ will be a finite subset of $\mathbb Z^n$, and the adjacency will represent some sort of ``closeness" of the points in $\mathbb{Z}^n$. 

\subsection{Adjacencies and Homotopy}
We write $x \leftrightarrow_{\kappa} y$ to indicate that $x$ and $y$ are $\kappa$-adjacent and use the notation $x \Leftrightarrow_{\kappa} y$ to indicate that $x$ and $y$ are $\kappa$-adjacent or  equal. Or simply, we use $x \leftrightarrow y$ and $x \Leftrightarrow y$ whenever the adjacency $\kappa$ is understood or unnecessary to mention.

In this paper, we will generally use the following standard adjacencies on $\Z^n$: For $t \in \mathbb{N}$ with $1 \leq t \leq n$, any two distinct points $p = (p_1, p_2, \ldots , p_n)$ and $q = (q_1, q_2, \ldots , q_n)$ in $\mathbb{Z}^n$ are said to be $c_t$-adjacent if at most $t$ of their coordinates differ by $\pm 1,$ and all others coincide. Often, this adjacency is described in terms of how many points in $\mathbb Z^n$ are adjacent to a given point. For instance, when $n=1$, the $c_1$-adjacency is often called $2$-adjacency. In dimension 2, we have  $c_1$-adjacency and  $c_2$-adjacency as $4$-adjacency and $8$-adjacency respectively. 
%
%
\begin{definition}  \cite{Boxer94digitally}
	A ``digital interval" is defined as the set $\lbrack a, b \rbrack _{\mathbb{Z}} = \{n \in \mathbb{Z} \, | \, a \leq n \leq b\}$ together with the $2$-adjacency relation, where $a, b \in \mathbb{Z}$ such that $a \lneq b$.
\end{definition}
\begin{definition} \cite{Rosen86continuous,Boxer99classical}
	A function $f : X \longrightarrow Y$ between digital images $(X, \kappa_1)$ and $(Y, \kappa_2)$ is $(\kappa_1, \kappa_2)$-continuous if and only if for every $x, y \in X, f(x) \Leftrightarrow_{\kappa_2} f(y)$ whenever $x \leftrightarrow_{\kappa_1} y$.
\end{definition}
If $\kappa_1 = \kappa_2 = \kappa$, we say that a function is $\kappa$-continuous to abbreviate $(\kappa, \kappa)$-continuous. Whenever $\kappa_1$ and $\kappa_2$ are understood, we simply call the function continuous. 
\begin{definition} \cite{Khalimsky87motion}
	A digital $\kappa$-path in a digital image $(X,\kappa)$ is a $(2, \kappa)$-continuous function $\gamma : \lbrack 0, m \rbrack_{\mathbb{Z}} \longrightarrow X$. Further, $\gamma$ is called a digital $\kappa$-loop if $\gamma(0) = \gamma(m),$ and the point $p = \gamma(0)$ is called the base point of the loop $\gamma.$ Moreover, if $\gamma$ is a constant function, then $\gamma$ is called a trivial loop.
\end{definition}
\begin{definition}\label{defdhomo} \cite{Boxer99classical}
	Let $(X, \kappa_1)$ and $(Y, \kappa_2)$ be digital images. Suppose that $f, g : X \longrightarrow Y$ are $(\kappa_1, \kappa_2)$-continuous functions, and that there is a positive integer $m$ and a function $H : X \times \lbrack 0, m \rbrack_{\mathbb{Z}} \longrightarrow Y$ such that:
	\begin{itemize} 
		\item For all $x \in X, H(x, 0) = f(x)$ and $H(x, m) = g(x)$;
		\item For all $x \in X,$ the induced function $H_x : \lbrack 0, m \rbrack_{\mathbb{Z}} \longrightarrow Y$ defined by 
		\[H_x(t) = H(x, t), \mbox{ for all } t \in \lbrack 0, m \rbrack_{\mathbb{Z}} \]
		is $(c_1, \kappa_1)$-continuous. That is, $H_x(t)$ is a $\kappa$-path in $Y$;
		\item For all $t \in \lbrack 0, m \rbrack_{\mathbb{Z}},$ the induced function $H_t : X \longrightarrow Y$ defined by
		\[H_t(x) = H(x, t), \mbox{ for all } x \in X\]
		is $(\kappa_1, \kappa_2)$-continuous.
	\end{itemize}
	Then $H$ is a digital homotopy (or $\kappa$-homotopy) between $f$ and $g$. In this case, the functions $f$ and $g$ are said to be digitally homotopic (or $\kappa$-homotopic), which we denote by $f \simeq g.$
\end{definition}

The definition of homotopy can be expressed more succinctly as in classical topology by defining an adjacency relation on the product $X \times \lbrack 0,m \rbrack_\Z$. Unlike the classical situation in which there is a well-defined unique product topology, we have several natural choices for product adjacencies, in \cite{Boxer16generalized} called the \emph{normal product adjacencies}:
\begin{definition} \cite{Boxer16generalized}
	Let $(X_i, i)$ be digital images for each $i \in \{1, \ldots, n\}$. Then for some $u \in \{1, \ldots, n\},$ the normal product adjacency $NP_u(\kappa_1, \ldots, \kappa_n)$ is the adjacency relation on $\Pi^n_{i=1} X_i$ defined by: the p$(x_1, \ldots, x_n)$ and $(x'_1, \ldots, x'_n)$ are adjacent if and only if their coordinates are adjacent in at most $u$ positions, and equal in all other positions.
\end{definition}
When the underlying adjacencies are understood, we simply use $NP_u$ instead of $NP_u(\kappa_1, \ldots, \kappa_n).$ 

Boxer showed:
\begin{theorem}\label{hthm}\cite{Boxer16generalized}
	Let $(X, \kappa)$ and $(Y, \lambda)$ be digital images. Then $H : X \times \lbrack 0, k \rbrack_{\mathbb{Z}} \longrightarrow Y$ is a homotopy if and only if $H$ is $(NP_1(\kappa, c_1), \lambda)$-continuous.
\end{theorem}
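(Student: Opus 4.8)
The plan is to prove the biconditional by directly unwinding both definitions, observing that the single-position adjacency $NP_1(\kappa,c_1)$ on $X \times [0,k]_{\Z}$ decomposes exactly into the two ``axis-aligned'' continuity requirements of Definition \ref{defdhomo}. Concretely, since $NP_1(\kappa,c_1)$ permits adjacency in at most one coordinate while requiring equality in the other, and since adjacency relations are antireflexive, two \emph{distinct} points $(x,t)$ and $(x',t')$ are $NP_1$-adjacent if and only if exactly one of the following holds: (i) $x \leftrightarrow_\kappa x'$ and $t = t'$, or (ii) $x = x'$ and $t \leftrightarrow_{c_1} t'$. Case (i) is precisely the adjacency tested by the slice maps $H_t$, and case (ii) is precisely the adjacency tested by the path maps $H_x$. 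Throughout I would set $f := H_0$ and $g := H_k$, so that the boundary conditions $H(x,0)=f(x)$ and $H(x,k)=g(x)$ hold by definition.

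For the forward direction, assuming $H$ is a homotopy, I would take any $NP_1$-adjacent pair $(x,t)$ and $(x',t')$ and split into the two cases above. In case (i), $H(x,t)=H_t(x)$ and $H(x',t')=H_t(x')$, so the $(\kappa,\lambda)$-continuity of $H_t$ gives $H(x,t) \Leftrightarrow_\lambda H(x',t')$. In case (ii), $H(x,t)=H_x(t)$ and $H(x',t')=H_x(t')$, so the fact that $H_x$ is a $\lambda$-path (i.e. $(c_1,\lambda)$-continuous) gives $H(x,t) \Leftrightarrow_\lambda H(x',t')$. In either case the required adjacency holds, so $H$ is $(NP_1(\kappa,c_1),\lambda)$-continuous.

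For the converse, assuming $H$ is $(NP_1(\kappa,c_1),\lambda)$-continuous, I would recover each of the three homotopy conditions. The boundary conditions are immediate from the definitions of $f$ and $g$. To see that each $H_t$ is $(\kappa,\lambda)$-continuous, take $x \leftrightarrow_\kappa x'$; then $(x,t)$ and $(x',t)$ are $NP_1$-adjacent through case (i), so continuity of $H$ yields $H_t(x) \Leftrightarrow_\lambda H_t(x')$. Symmetrically, to see that each $H_x$ is a $\lambda$-path, take $t \leftrightarrow_{c_1} t'$; then $(x,t)$ and $(x,t')$ are $NP_1$-adjacent through case (ii), so $H_x(t) \Leftrightarrow_\lambda H_x(t')$. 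Thus every clause of Definition \ref{defdhomo} holds and $H$ is a homotopy.

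The argument is essentially a translation between the two formulations, so there is no serious obstacle; the only point demanding care is the reading of the ``at most one position'' clause in the definition of $NP_1$. It must be combined with antireflexivity so that exactly one coordinate is adjacent while the other is equal, which is what produces the clean two-case split and, crucially, means one never has to analyze simultaneous movement in both coordinates (that would correspond to the stronger $NP_2$ adjacency and a distinct, diagonal continuity condition not present in Definition \ref{defdhomo}).
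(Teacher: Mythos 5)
Your proof is correct, and since the paper states this theorem as a quoted result of Boxer \cite{Boxer16generalized} without reproducing a proof, there is nothing to diverge from: the direct case split of $NP_1(\kappa,c_1)$-adjacency into the slice condition (same $t$, adjacent $x$) and the path condition (same $x$, adjacent $t$), using antireflexivity to rule out simultaneous movement, is exactly the standard argument in the cited source. One small point you handled silently and correctly: the paper's Definition~\ref{defdhomo} misprints the path condition as $(c_1,\kappa_1)$-continuity of $H_x$, which should read $(c_1,\kappa_2)$-continuity (i.e.\ $H_x$ is a path in $Y$), and your reading matches the intended meaning.
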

In \cite{Staecker20digital}, the third author explored another homotopy definition, by simply using $NP_2$ in place of $NP_1.$ This imposes extra restrictions on the homotopy $H$. This stronger homotopy relation is also used in \cite{Luptonetal19fundamental}.
\begin{definition} \cite{Staecker20digital}
	Let $(X, \kappa)$ and $(Y, \lambda)$ be digital images. We say that $H^{*} : X \times \lbrack 0, k \rbrack_{\mathbb{Z}} \longrightarrow Y$ is a strong homotopy when $H^{*}$ is $(NP_2(\kappa, c_1), \lambda)$-continuous. If there is a strong homotopy $H^{*}$ between $f$ and $g$, we say $f$ and $g$ are strongly homotopic, and we write $f \simeq^{*} g.$ 
\end{definition}
Now, we present the following definitions which will be used later.
\begin{definition}\label{equitype} 
	Let $X$ and $Y$ be digital images. If $f : X \longrightarrow Y$ and $g : Y \longrightarrow X$ are continuous functions such that $g \circ f \simeq id_X$ and
	$f \circ g \simeq id_Y,$ where $id_X$ and $id_Y$ denote the identity mappings on $X$ and $Y.$ Then $f$ and $g$ are called homotopy equivalences. Moreover, $X$ and $Y$ are said to have the same homotopy type.
\end{definition}
\begin{definition} \label{strng_equitype}
	Let $X$ and $Y$ be digital images. If $f : X \longrightarrow Y$ and $g : Y \longrightarrow X$ are continuous functions such that $g \circ f \simeq^{*} id_X$ and
	$f \circ g \simeq^{*} id_Y,$ where $id_X$ and $id_Y$ denote the identity mappings on $X$ and $Y.$ Then $f$ and $g$ are called strong homotopy equivalences. Moreover, $X$ and $Y$ are  said to have the same strong homotopy type.
\end{definition}
Given any function $f:X\to X$, an element $x\in X$ is a \emph{fixed point} if $f(x)=x$. When $f(x) \Leftrightarrow x$ we say $x$ is an \emph{approximate fixed point}, and more generally if there is a path from $x$ to $f(x)$ of length $n$ or less, we say $x$ is a \emph{$n$-approximate fixed point}. Clearly a $1$-approximate fixed point is an approximate fixed point, and a $0$-approximate fixed point is a fixed point.

\subsection{Simplicial Homology}
Digital simplicial homology theory was studied initially by Arslan et al. in \cite{Arslanetal08homology}. This was later extended and published in English by Boxer et al. \cite{Boxeretal11topological}. We will review some basic definitions presented in \cite{Boxeretal11topological}.
\begin{definition} 
	Let $(X, \kappa)$ be a finite digital image and $q \geq 0$. Then a $q$-simplex is defined to be any set of $q+1$ mutually adjacent points of $(X, \kappa)$.
\end{definition} 
Let $x_0, \ldots , x_q,$ be some ordered list of mutually adjacent points and the associated ordered $q$-simplex be denoted by $\langle x_0, \ldots , x_q \rangle.$ The simplicial chain group $C_q(X)$ is defined as the abelian group generated by the set of all ordered $q$-simplices, where if $\rho : \{0, \ldots , q\} \longrightarrow \{0, \ldots , q\}$ is a permutation, then in $C_q(X)$ we identify
$\langle x_0, \ldots , x_q \rangle = (-1)^\rho \langle x_{\rho_0}, \ldots , x_{\rho_q} \rangle$
where $(-1)^{\rho} = 1$ when $\rho$ is an even permutation, and $(-1)^{\rho} = -1$ when $\rho$ is an odd permutation.

The boundary homomorphism $\partial_q : C_q(X) \longrightarrow C_{q-1}(X)$ is the homomorphism induced by defining:
\[\partial_q (\langle x_0, \ldots , x_q \rangle) = \sum^q_{i=0} (-1)^i \langle x_0, \ldots , \widehat{x_i} , \ldots x_q \rangle,\]
where $\widehat{x_i}$ indicates omission of the $x_i$ coordinate.

It can be verified that $\partial_{q-1} \circ \partial_q = 0,$ and so $(C_q(X), \partial_q)$ forms a chain complex, and the dimension $q$ homology group $H_q(X) = Z_q(X) / B_q(X)$ is defined to be the homology of this simplicial chain complex.
\begin{definition} \cite{Arslanetal08homology}
	Let $(X, \kappa)$ be a finite digital image. Then
	\begin{enumerate} 
		\item $Z^{\kappa}_q(X) = \Ker \partial_q$ is called the group of $q$-simplicial cycles;
		\item $B^{\kappa}_q(X) = \Ima \partial_{q+1}$ is called the group of $q$-simplicial boundaries;
		\item $H^{\kappa}_q(X) = Z^{\kappa}_q(X) / B^{\kappa}_q(X)$ is called the $q$th simplicial homology group.
	\end{enumerate}
\end{definition} 

\begin{definition}\cite{Boxeretal11topological} \label{defsm}
	Let $\phi : (X, \kappa) \longrightarrow (Y, \lambda)$ be a digitally continuous mapping. For $q \geq 0,$ we define a homomorphism $\phi_q : C_q^{\kappa}(X) \longrightarrow C_q^{\lambda}(Y)$ by 
	\[\phi_q(\langle p_0 , \ldots , p_q \rangle ) = \langle \phi(p_0), \ldots , \phi(p_q) \rangle,\]
	where the right side is interpreted as 0 if the set $\{\phi(p_0), \ldots , \phi(p_q)\}$ has fewer than $q+1$ points.
\end{definition}
As a consequence of Definition \ref{defsm}, we have the following lemma.
\begin{lemma}\cite{Boxeretal11topological}\label{lemchain}
	If $\phi : (X, \kappa) \longrightarrow (Y, \lambda)$ is a digitally continuous mapping then $\phi_q : C_q^{\kappa}(X) \longrightarrow C_q^{\lambda}(Y)$ is a chain map for each $q$. That is, $\phi_q \partial = \partial \phi_q.$
\end{lemma}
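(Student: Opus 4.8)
The plan is to check the identity $\partial_q\phi_q = \phi_{q-1}\partial_q$ directly on generators, since both composites are group homomorphisms and so it suffices to verify agreement on a single ordered simplex $\sigma = \langle p_0, \ldots, p_q\rangle$. Before the computation I would record why $\phi_q$ even lands in $C_q^\lambda(Y)$: continuity of $\phi$ gives $\phi(p_i) \Leftrightarrow_\lambda \phi(p_j)$ whenever $p_i \leftrightarrow_\kappa p_j$, so whenever the images $\phi(p_0), \ldots, \phi(p_q)$ happen to be pairwise distinct they are in fact pairwise $\lambda$-adjacent and $\langle \phi(p_0), \ldots, \phi(p_q)\rangle$ is a genuine $q$-simplex of $(Y,\lambda)$; the zero-convention and the permutation sign rule are exactly what make $\phi_q$ respect the identifications defining the chain groups.

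I would then split into cases according to whether $\phi$ is injective on the vertex set $\{p_0, \ldots, p_q\}$. In the injective case both composites expand identically:
\[
\partial_q\phi_q(\sigma) = \sum_{i=0}^q (-1)^i \langle \phi(p_0), \ldots, \widehat{\phi(p_i)}, \ldots, \phi(p_q)\rangle = \phi_{q-1}\partial_q(\sigma),
\]
since each face again has pairwise distinct images and nothing is annihilated by the zero-convention, so the two expressions match term by term.

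The substantive case, and the place I expect the only real difficulty, is when two or more of the images coincide. Then $\phi_q(\sigma) = 0$ and the left-hand side vanishes, so I must show the right-hand side vanishes as well, even though it is an alternating sum of faces that could a priori survive. Here I would argue by counting collisions: deleting a single vertex $p_i$ from $\sigma$ destroys at most one repeated value, so if three vertices share a common image, or if there are two disjoint coinciding pairs, then every face $\langle \ldots \widehat{p_i} \ldots\rangle$ still carries a repeated image and is killed by $\phi_{q-1}$, giving $\phi_{q-1}\partial_q(\sigma) = 0$ at once. The only configuration left to treat is exactly one coinciding pair $\phi(p_a) = \phi(p_b)$ with $a < b$ and all remaining images distinct; in that configuration precisely the two faces omitting $p_a$ and omitting $p_b$ survive the passage through $\phi_{q-1}$.

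I would finish by showing these two surviving terms cancel. Writing $\tau$ for the image of the face omitting $p_b$ and $\tau_a$ for the image of the face omitting $p_a$, both have the same underlying vertex set (deleting either copy of the repeated value $v = \phi(p_a) = \phi(p_b)$ leaves the same points), and all vertices other than $v$ occur in the same relative order; the two lists differ only in that $v$ sits in slot $a$ of $\tau$ but in slot $b-1$ of $\tau_a$, a displacement realized by $b-a-1$ adjacent transpositions. Hence $\tau_a = (-1)^{b-a-1}\tau$, and the relevant part of $\phi_{q-1}\partial_q(\sigma)$ is
\[
(-1)^a\tau_a + (-1)^b\tau = (-1)^a(-1)^{b-a-1}\tau + (-1)^b\tau = \bigl((-1)^{b-1} + (-1)^b\bigr)\tau = 0,
\]
which completes the verification. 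I anticipate that the sign-and-position bookkeeping in this final step, together with the careful enumeration of collision patterns in the previous paragraph, are the only delicate points; all the rest is formal manipulation of the boundary and induced maps.
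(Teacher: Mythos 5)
Your proof is correct, and since the paper states Lemma \ref{lemchain} without proof (citing \cite{Boxeretal11topological}), the relevant comparison is with the standard argument there, which yours matches: generator-by-generator verification, the injective case by direct expansion, and in the degenerate case the collision count showing that either every face is annihilated or exactly two faces survive and cancel with sign $(-1)^a(-1)^{b-a-1}+(-1)^b=0$. Your sign bookkeeping and the exhaustiveness of the collision patterns (three-way collision, disjoint pairs, or exactly one pair) both check out, so nothing is missing.
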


\subsection{Cubical Homology}
Cubical homology theory was introduced by Jamil and Ali \cite{JamilAli19digital} by imitating the classical cubical homology theory. However, prior to that, Karaca and Ege in \cite{KaracaEge12cubical} establish another type of cubical homology theory based on some classical constructions. Their focus on ``cubical sets" imposes the significant restriction that the digital image $X \subseteq \mathbb{Z}^n$ must use $c_1$-adjacency. This theory was modified and expanded in \cite{Staecker20digital}.

Now, we will recall some important definitions and basic results related to this theory based on the latter approach. 
\begin{definition} \cite{KaracaEge12cubical}
	An elementary interval is any set: $\lbrack a, a + 1\rbrack_{\mathbb{Z}} = \{a, a + 1\}$ or $\lbrack a, a\rbrack_{\mathbb{Z}} = \{a\}.$ An elementary interval of 1 point is called degenerate, and one of 2 points is called nondegenerate.
\end{definition}
\begin{definition} \cite{Staecker20digital}
	An elementary cube is any set: $\sigma = t_1 \times \ldots \times t_n,$ where each $t_i$ is an elementary interval. The number of nondegenerate factors is called the dimension of $\sigma$.
\end{definition}
Note that by elementary $q$-cube we mean an elementary cube of dimension $q$. Moreover, whenever $X \subset \mathbb{Z}^n$ is a digital image with $c_1$-adjacency, we can uniquely express $X$ as a union of elementary cubes.

Let $\bar C^{c_1}_q(X)$ be the free abelian group whose basis is the set of all $q$-cubes in $X.$ The cubical boundary operator is defined in terms of cubical face operators. i.e. For some elementary $q$-cube $\sigma = t_1 \times \ldots \times t_n,$ we define $q-1$-cubes $A_i$ and $B_i$ as:
\begin{align*}
A_i \sigma &= (t_1 \times \ldots \times t_{i-1} \times \min{t_i} \times t_{i+1} \times \ldots \times t_{n}), \\
B_i \sigma &= (t_1 \times \ldots \times t_{i-1} \times \max{t_i} \times t_{i+1} \times \ldots \times t_{n}).
\end{align*}
These $A_i$ and $B_i$ give the ``front face" and ``back face" of the cube in each of its $q$ dimensions. Note that when $t_i$ is a degenerate interval, we have $A_i \sigma = B_i \sigma.$ Whereas when $t_i$ is a nondegenerate interval, $A_i \sigma $ and $B_i \sigma$ are distinct.

Given an elementary $q$-cube $\sigma = t_1 \times \ldots \times t_n,$ let $(j_1, \ldots, j_q)$ be the sequence of indices for which $t_{j_i}$ is nondegenerate. We define the boundary operator $\partial_q : \bar C^{c_1}_q(X) \longrightarrow \bar C^{c_1}_{q-1} (X)$ on $q$-cubes by the following formula:
\[\partial_q(\sigma) = \sum^q_{i=1}(-1)^i(A_{j_i} \sigma - B_{j_i} \sigma).\]
It can be verified that $\partial_{q-1} \circ \partial_q = 0,$ and thus $(\bar C^{c_1}_q(X), \partial_q)$ is a chain complex.
\begin{definition} \cite{Staecker20digital}
	Let $(X, \kappa)$ be a finite digital image. Then
	\begin{itemize} 
		\item $\bar Z^{c_1}_q(X) = \Ker \partial_q$ is called the group of $q$th $c_1$-cubical cycles;
		\item $\bar B^{c_1}_q(X) = \Ima \partial_{q+1}$ is called the group of $q$th $c_1$-cubical boundaries;
		\item $\bar H^{c_1}_q(X) = \bar Z^{c_1}_q(X) / \bar B^{c_1}_q(X)$ is called the $q$th $c_1$-cubical homology group.
	\end{itemize}
\end{definition} 
\begin{theorem}\cite{Staecker20digital}\label{c1chainmap}
	Let $X \subset \mathbb{Z}^n$ and $X \subset \mathbb{Z}^m$ be digital images with $c_1$-adjacency and $n \leq 4,$ and let $f : X \longrightarrow Y$ be continuous. Then $f_{q} : \bar C^{c_1}_q(X) \longrightarrow \bar C^{c_1}_q(Y )$ is a
	chain map. That is, $f_q \partial = \partial f_q.$
\end{theorem}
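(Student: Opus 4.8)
The plan is to verify the chain-map identity $f_q\partial = \partial f_q$ directly on basis elements. Since both $f_q\partial_q$ and $\partial_q f_q$ are group homomorphisms $\bar C^{c_1}_q(X) \to \bar C^{c_1}_{q-1}(Y)$, and the $q$-cubes of $X$ form a basis of $\bar C^{c_1}_q(X)$, it suffices to fix a single elementary $q$-cube $\sigma = t_1 \times \cdots \times t_n$ with nondegenerate directions $j_1 < \cdots < j_q$ and to prove $\partial_q(f_q\sigma) = f_{q-1}(\partial_q\sigma)$. Here I take the induced map $f_q$ in the spirit of the simplicial map of Definition \ref{defsm}: $f_q(\sigma)$ is the suitably oriented image cube $f(\sigma)$ when $f$ is injective on the vertex set of $\sigma$, and $f_q(\sigma)=0$ otherwise. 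With this convention the verification splits naturally into a non-degenerate case and a collapsing case.

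First I would treat the non-degenerate case, in which $f$ restricted to the $2^q$ vertices of $\sigma$ is injective. The key sub-step is to show that the image is again an elementary $q$-cube $\tau$ in $Y$: because $f$ is $c_1$-continuous, adjacent vertices of $\sigma$ map to equal or $c_1$-adjacent points, and injectivity upgrades this to genuine $c_1$-adjacency, forcing the $2^q$ image points to be the vertices of a $q$-cube. One then checks that $f$ carries the front and back faces $A_{j_i}\sigma$ and $B_{j_i}\sigma$ onto the corresponding pair of parallel $(q-1)$-faces of $\tau$, with matching signs. Granting this correspondence, both $\partial_q(f_q\sigma)$ and $f_{q-1}(\partial_q\sigma)$ reduce to the same signed sum of the $(q-1)$-dimensional faces of $\tau$, and the identity follows term by term.

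The substance of the proof is the \emph{collapsing} case, where $f$ identifies at least two vertices of $\sigma$, so that $f_q(\sigma)=0$ and the target identity becomes $f_{q-1}(\partial_q\sigma)=0$. Here I would expand $f_{q-1}(\partial_q\sigma) = \sum_{i=1}^q (-1)^i\big(f_{q-1}(A_{j_i}\sigma) - f_{q-1}(B_{j_i}\sigma)\big)$ and argue that all surviving nonzero terms cancel in pairs. The strategy is to use the coincidences among vertex-images forced by the collapse to pair up faces whose images, as oriented $(q-1)$-cubes, are equal; each such pair must then enter the sum with opposite sign. Organizing this pairing amounts to a finite combinatorial analysis of the possible ways a $c_1$-continuous map can fold the vertex set $\{0,1\}^q$ of a cube onto a lower-dimensional image while keeping each face either embedded or collapsed.

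I expect this collapsing analysis to be the main obstacle, and it is precisely where the hypothesis $n\le 4$ enters: since a cube in $X\subset\Z^n$ has dimension at most $n$, the bound confines the verification to cubes of dimension $q \le 4$, over which the collapse patterns admissible for a $c_1$-continuous map can be enumerated and checked (plausibly with computer assistance) to always yield cancelling pairs. I would also anticipate that the restriction is sharp rather than an artifact of the method, so part of the work is to locate, for $n=5$, a continuous $f$ and a $5$-cube whose image faces fail to cancel, producing $\partial f_q \neq f_q \partial$; exhibiting such a configuration would both explain the hypothesis and confirm that no cleaner dimension-free argument is available. A secondary subtlety to watch throughout is the orientation bookkeeping in the definition of $f_q$, since a permutation or reflection of the nondegenerate directions under $f$ contributes signs that must be tracked consistently in both the embedded and collapsed cases.
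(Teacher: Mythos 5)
Your overall skeleton---verify the identity on basis cubes, split into an embedded case and a collapsing case, and settle the collapsing case by a finite, possibly computer-assisted enumeration in dimensions $q\le 4$---is in fact how this theorem is actually established: the paper gives no hand proof, citing \cite{Staecker20digital}, where the result was verified by computer enumeration of all possible elementary $q$-cubes in low dimensions, and the paper explicitly remarks that ``a proof by hand would still be desirable.'' So to the extent your proposal ends in an enumeration, it coincides with the real argument. However, one part of your outline is genuinely wrong: you anticipate that the hypothesis $n\le 4$ is sharp and propose to locate, for $n=5$, a continuous $f$ and a $5$-cube whose face images fail to cancel. The paper says the opposite: the restriction to $n\le 4$ was imposed purely for performance reasons in the enumeration, and \cite{Staecker20digital} conjectures that the induced map is a chain map in all dimensions. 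No counterexample is believed to exist, so this portion of your plan is a search for an object conjectured not to exist, and it misreads where the hypothesis comes from.

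There is also a substantive gap in your treatment of the induced map itself. Defining $f_q$ by analogy with the simplicial map of Definition \ref{defsm} (image cube when $f$ is injective on the vertices, $0$ otherwise) quietly assumes several facts that are exactly what the enumeration certifies. The claim that injectivity on the $2^q$ vertices forces the image to be an elementary $q$-cube is asserted, not proved; it is checkable for $q=2$ but is precisely the kind of statement that required machine verification in higher dimensions. Injectivity also does not by itself determine $f_q(\sigma)$: a continuous map can carry a $2$-cube onto itself by a $90^\circ$ rotation, which is injective but is not a product of coordinate maps, so a sign convention (the orientation of the induced symmetry of the cube) must be built into the definition before the embedded case can ``follow term by term.'' And in the collapsing case the image need not even be a cube of lower dimension---a $2$-cube can fold onto an L-shaped set of three points---so the degenerate alternative is subtler than ``dimension drops.'' Your pairing heuristic for the cancellation is plausible and is presumably what the enumeration confirms, but you never exhibit the pairing, and neither does the literature: that missing combinatorial argument is exactly the gap between your outline and a hand proof, and it is why the cited proof is a computer check.
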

The theorem above was proved by computer enumeration of all possible elementary $q$-cubes in low dimensions. A proof by hand would still be desirable. The restriction to $n\le 4$ was for performance reasons in the enumerations: \cite{Staecker20digital} conjectures that $f_\#$ is a chain map in all dimensions. 
\subsection{Traces in Homology}
In this section, we review a fact from homological algebra which is foundational for computing Lefschetz numbers. We will let $(C_q,\partial_q)$ be any chain complex. That is, each $C_q$ is a finitely generated abelian group, and each $\partial_q: C_q \to C_{q-1}$ is a homomorphism satisfying $\partial_q \circ \partial_{q+1} = 0$. A sequence of homomorphisms $f_q:C_q\to C_q$ is called a \emph{chain map} when $f_q\circ \partial = \partial \circ f_q$ for each $q$. In this case, $f_q$ induces a homomorphism $f_{*,q}:H_q \to H_q$, where $H_q$ are the homology groups of the chain complex.

Now, we will require a purely algebraic lemma:
\begin{lemma}\cite{Naber80topological}\label{lemnabor}
	Let $\psi : G \longrightarrow G$ be an endomorphism of a finitely generated Abelian group $G$ with a subgroup $H$ such that $\psi(H) \subseteq H$. Then \[\tr(\psi) = \tr(\hat{\psi}) + \tr(\psi| H).\]
\end{lemma}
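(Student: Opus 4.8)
The plan is to reduce this purely algebraic statement to the familiar linear-algebra fact that the trace of a block upper-triangular matrix is the sum of the traces of its diagonal blocks. The one point requiring care is the meaning of $\tr$ for a finitely generated abelian group that may have torsion. I would take $\tr(\psi)$ to be the trace of the induced $\mathbb{Q}$-linear endomorphism $\psi_{\mathbb{Q}} := \psi \otimes \mathrm{id}$ on the finite-dimensional rational vector space $V := G \otimes_{\mathbb{Z}} \mathbb{Q}$ (equivalently, the trace of the integer matrix representing the map induced by $\psi$ on the free part $G/\mathrm{tors}(G)$). With this definition fixed, the whole computation takes place in vector spaces.

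First I would record that $H$ and $G/H$ are themselves finitely generated, and that the inclusion and projection assemble into a short exact sequence
\[ 0 \longrightarrow H \longrightarrow G \longrightarrow G/H \longrightarrow 0. \]
Because $\mathbb{Q}$ is torsion-free, hence flat over $\mathbb{Z}$, tensoring preserves exactness, so
\[ 0 \longrightarrow H \otimes \mathbb{Q} \longrightarrow G \otimes \mathbb{Q} \longrightarrow (G/H) \otimes \mathbb{Q} \longrightarrow 0 \]
is an exact sequence of finite-dimensional $\mathbb{Q}$-vector spaces. Writing $W \subseteq V$ for the image of $H \otimes \mathbb{Q}$, I identify $W$ with $H \otimes \mathbb{Q}$ and $V/W$ with $(G/H)\otimes \mathbb{Q}$. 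The hypothesis $\psi(H) \subseteq H$ gives $\psi_{\mathbb{Q}}(W) \subseteq W$; moreover $\psi_{\mathbb{Q}}|_W$ is precisely the rationalization of $\psi|H$, and the endomorphism induced by $\psi_{\mathbb{Q}}$ on $V/W$ is the rationalization of $\hat\psi$. Hence $\tr(\psi|H) = \tr(\psi_{\mathbb{Q}}|_W)$ and $\tr(\hat\psi) = \tr(\overline{\psi_{\mathbb{Q}}})$, where $\overline{\psi_{\mathbb{Q}}}$ is the induced map on the quotient.

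Finally I would invoke the block-triangular trace computation. Choose a basis $w_1, \ldots, w_k$ of $W$ and extend it to a basis $w_1, \ldots, w_k, v_1, \ldots, v_m$ of $V$. Since $\psi_{\mathbb{Q}}$ carries $W$ into $W$, its matrix in this basis has the form $\left(\begin{smallmatrix} A & B \\ 0 & C \end{smallmatrix}\right)$, where $A$ represents $\psi_{\mathbb{Q}}|_W$ and $C$ represents the induced map on $V/W$ in the basis given by the images of $v_1, \ldots, v_m$. Reading off the diagonal yields $\tr(\psi) = \tr(\psi_{\mathbb{Q}}) = \tr(A) + \tr(C) = \tr(\psi|H) + \tr(\hat\psi)$, which is the assertion.

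The main obstacle here is conceptual rather than computational: one must resist working directly with the free parts $H/\mathrm{tors}(H)$, $G/\mathrm{tors}(G)$, and $(G/H)/\mathrm{tors}(G/H)$, because the functor $G \mapsto G/\mathrm{tors}(G)$ is \emph{not} exact (already the sequence $0 \to 2\mathbb{Z} \to \mathbb{Z} \to \mathbb{Z}/2 \to 0$ induces a non-exact sequence of free parts). Passing to $\otimes\,\mathbb{Q}$, where flatness restores exactness, is exactly what makes additivity of the trace fall out of the short exact sequence; after that the argument is the standard upper-triangular block computation.
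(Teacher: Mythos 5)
The paper does not prove this lemma at all: it is quoted verbatim from Naber's book \cite{Naber80topological} and used as a black box in the proof of the Hopf trace formula, so there is no in-paper argument to compare against. Your proof is correct and self-contained: the definition of $\tr$ for a finitely generated abelian group (trace on the free part $G/\mathrm{tors}(G)$) does agree with the trace of $\psi\otimes\mathrm{id}$ on $G\otimes_{\mathbb{Z}}\mathbb{Q}$, a subgroup of a finitely generated abelian group is finitely generated, flatness of $\mathbb{Q}$ makes $0\to H\otimes\mathbb{Q}\to G\otimes\mathbb{Q}\to (G/H)\otimes\mathbb{Q}\to 0$ exact, and the block upper-triangular computation then gives $\tr(\psi)=\tr(\hat\psi)+\tr(\psi|H)$. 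Your closing remark identifies the genuine trap correctly: the free-part functor is not exact (e.g.\ $0\to 2\mathbb{Z}\to\mathbb{Z}\to\mathbb{Z}/2\to 0$), and the common sloppy argument that works directly with adapted bases of the free parts needs extra care precisely because $G/H$ can acquire torsion; rationalization sidesteps this cleanly, and is essentially the standard textbook route (Naber's own treatment reduces to the torsion-free case and handles this with adapted bases via the structure theorem, so your $\otimes\,\mathbb{Q}$ argument is, if anything, the tidier variant).
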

The following is often referred to as the Hopf Trace Formula in the literature (see \cite{Brown71lefschetz}). For completeness we will give a proof here.
\begin{theorem}\label{hopf_trace}
	Let $(C_q,\partial_q)$ be a chain complex, and let $f_q:C_q \to C_q$ be a chain map and $f_{*,q}:H_q \to H_q$ be the homomorphism induced by $f_q$. In addition, let $B_{-1} = B_n = 0.$ Then 
	\[\sum_{q=0}^n (-1)^q \tr(f_q) = \sum_{q=0}^n (-1)^q \tr(f_{*,q}).\]
\end{theorem}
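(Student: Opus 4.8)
The plan is to prove the formula by decomposing each $\tr(f_q)$ along the nested invariant subgroups $B_q \subseteq Z_q \subseteq C_q$ and then exploiting a telescoping cancellation in the alternating sum. The essential tool is Lemma \ref{lemnabor}, which splits the trace of an endomorphism across a quotient, provided the relevant subgroup is invariant. Since $f_q$ is a chain map, it is immediate that $f_q(Z_q) \subseteq Z_q$ and $f_q(B_q) \subseteq B_q$, so both subgroups are invariant and Naber's lemma applies to each.

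First I would apply Lemma \ref{lemnabor} to $f_q$ with invariant subgroup $Z_q \subseteq C_q$, obtaining $\tr(f_q) = \tr(\bar f_q) + \tr(f_q|Z_q)$, where $\bar f_q$ denotes the map induced on $C_q/Z_q$. Next I would apply the lemma again to $f_q|Z_q$ with invariant subgroup $B_q \subseteq Z_q$; since $Z_q/B_q = H_q$ and the induced map is by definition $f_{*,q}$, this gives $\tr(f_q|Z_q) = \tr(f_{*,q}) + \tr(f_q|B_q)$. Combining the two yields
\[
\tr(f_q) = \tr(\bar f_q) + \tr(f_{*,q}) + \tr(f_q|B_q).
\]

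The key structural step is to re-express $\tr(\bar f_q)$ as a boundary trace one dimension down. Because $\Ker \partial_q = Z_q$ and $\Ima \partial_q = B_{q-1}$, the first isomorphism theorem shows that $\partial_q$ induces an isomorphism $C_q/Z_q \cong B_{q-1}$. The chain map relation $\partial_q f_q = f_{q-1}\partial_q$ guarantees this isomorphism intertwines $\bar f_q$ with $f_{q-1}|B_{q-1}$, so that $\tr(\bar f_q) = \tr(f_{q-1}|B_{q-1})$. Writing $b_q = \tr(f_q|B_q)$, the displayed identity becomes $\tr(f_q) = b_{q-1} + \tr(f_{*,q}) + b_q$. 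Taking the alternating sum $\sum_{q=0}^n (-1)^q \tr(f_q)$ then makes the $b$-terms cancel in pairs: the coefficient of each $b_j$ with $0 \le j \le n-1$ is $(-1)^j + (-1)^{j+1} = 0$, leaving only the boundary contributions $b_{-1}$ and $(-1)^n b_n$, both of which vanish since $B_{-1} = B_n = 0$. What survives is exactly $\sum_{q=0}^n (-1)^q \tr(f_{*,q})$, as claimed.

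The main obstacle is the verification that the boundary-induced isomorphism $C_q/Z_q \cong B_{q-1}$ is genuinely compatible with the chain maps, so that its trace can be transported from dimension $q$ to dimension $q-1$; everything hinges on the commutativity $\partial_q f_q = f_{q-1}\partial_q$, so the chain map hypothesis must be invoked precisely here. The two trace splittings are routine once Lemma \ref{lemnabor} is in hand, so the real content lies in producing this dimension shift, which is what drives the telescoping cancellation and delivers the Hopf trace formula.
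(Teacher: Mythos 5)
Your proposal is correct and follows essentially the same route as the paper's proof: the same two applications of Lemma \ref{lemnabor} along $B_q \subseteq Z_q \subseteq C_q$, the same identification $C_q/Z_q \cong B_{q-1}$ via $\partial_q$ (using the chain map relation to transport the trace), and the same telescoping cancellation with $B_{-1} = B_n = 0$. Indeed, you make the intertwining step more explicit than the paper does, but the argument is the same.
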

\begin{proof}
	As usual we write $Z_q = \ker \partial_q$ and $B_q = \Ima \partial_{q+1}$. Since $f_q$ is a chain map, we can observe that both $f_q|Z_q$ and $f_q|B_q$ are endomorphisms for any $q$. By Lemma \ref{lemnabor}, we have
	\[\tr(f_q) = \tr(\hat{f}_q) + \tr(f| Z_q)\] 
	where $\hat{f}_q: C_q / Z_q \longrightarrow C_q / Z_q$ is the induced endomorphism. We have, $B_{q-1} = \Ima \partial_{q}$ which is isomorphic to $C_q / Z_q$ and by using the fact that $f_q$ is a chain map and identifying $B_{q-1}$ and $C_q / Z_q$, we obtain $\tr(f_q) = \tr(f_{q-1}|B_{q-1})$ so that
	\begin{equation}\label{eq1}
	\tr(f_q) = \tr(f_{q-1}| B_{q-1}) + \tr(f_q| Z_q).
	\end{equation}
	Similarly, we have
	\begin{equation}\label{eq2}
	\tr(f_{*,q}) = \tr(f_q| Z_q) - \tr(f_q| B_q),
	\end{equation}
	since $B_q$ is a subgroup of $Z_q$ and invariant under the map $f_{q}| Z_{q}.$ Now, from (\ref{eq1}) and (\ref{eq2}), we have
	\begin{equation}\label{eq3}
	\tr(f_q) = \tr(f_{q-1}| B_{q-1}) + \tr(f_{*,q}) + \tr(f_q| B_q).
	\end{equation}
	Since dim $X = n$ and multiplying equation (\ref{eq3}) by $(-1)^q$ then summing over all $q$, we obtain
	\begin{equation}\label{eq4}
	\sum_{q=0}^n (-1)^q \tr(f_q) =  \sum_{q=0}^n (-1)^q \tr(f_{q-1}| B_{q-1}) + \sum_{q=0}^n (-1)^q \tr(f_{*,q}) + \sum_{q=0}^n (-1)^q \tr(f_q| B_q). 
	\end{equation}
	Since $B_{-1} = B_n = 0$, which further suggests that $\tr(f_q| B_q)$ occurs twice with opposite signs, equation (\ref{eq4}) becomes
	\[\sum_{q=0}^n (-1)^q \tr(f_q) = \sum_{q=0}^n (-1)^q \tr(f_{*,q})\]
	as required.
\end{proof}
The assumptions that $B_{-1} = B_n = 0$ will always hold in both the simplicial and $c_1$-cubical homology groups for a digital image, for appropriate choice of $n$.

\section{Simplicial Lefschetz Number}
In this section, we will define the simplicial Lefschetz number in the usual way as an alternating sum of traces. Using the simplicial homology theory, we obtain a ``simplicial Lefschetz number" $L(f)$.
\begin{definition}
	Let $X$ be a digital image and $n$ be the maximal dimension of any simplex in $X$. For a map $f : X \longrightarrow X,$ we define the simplicial Lefschetz number, denoted by $L(f)$ as follows:
	\[L(f) = \sum_{q=0}^n (-1)^q \tr(f_{*,q}),\]
	where $f_{*,q} : H_q^{\kappa}(X) \longrightarrow H_q^{\kappa}(X).$
\end{definition}
It is easy to compute the simplicial Lefschetz number of a constant.
\begin{example}\label{ex6}
	Let $(X, \kappa)$ be a finite digital image and $c : (X, \kappa) \longrightarrow (X, \kappa)$ be the constant mapping. Then  $\tr c_{*,q} = 1$ in dimension 0, and vanishes elsewhere, and so $L(c) = 1$.
\end{example}
Now, we will recall the definition of simplicial Euler characteristics for digital images. It has been studied for some time: it appears in \cite{Han07digital, Boxeretal11topological}, where it is simply called the \emph{digital Euler characteristic}. 
\begin{definition}\label{seuler}
	Let $X$ be a digital image and $n$ be the maximal dimension of any simplex in $X$. We denote by $\chi(X)$ the \emph{simplicial Euler characteristic} of $X$ as defined below.
	\[\chi(X) = \sum_{q=0}^n (-1)^q \dim H_q(X) = \sum_{q=0}^n (-1)^q \dim C_q(X).\]
\end{definition}
Note that the equality in Definition \ref{seuler} was proved in \cite{Boxeretal11topological}. However, for completeness we will highlight how it works in our situation. The above equality holds, since we are using Theorem \ref{hopf_trace} on the identity mapping which obviously is a chain map in all dimensions.
\begin{example}\label{expSEX}
	Let $(Y,c_1)$ be the digital image of Fig. \ref{fig2nohole} and $(Z,c_1)$ be the digital image of Fig. \ref{fig3nohole}. Then 
	$\chi(Y) = -1$ and $\chi(Z) = -2$.
\end{example}
\begin{example}\label{ex_id_euler}
	Let $(X, \kappa)$ be a finite digital image and $id: (X, \kappa) \longrightarrow (X, \kappa)$ be the identity mapping. Then $L(id) = \chi(X)$. This holds because the identity map will fix all the $q$-dimensional simplices in $X$. Thus, when viewed as a matrix, $id_{q}$ is the identity matrix. Hence for each $q$, the trace of the induced homomorphism $id_{q}$ equals the dimension of $C_q(X)$. Thus $L(id) = \chi(X)$.
\end{example}

As we will see later on, contrary to claims in \cite{EgeKaraca13lefschetz}, the simplicial Lefschetz number $L(f)$ is not invariant under homotopies of $f$. This is because $f\simeq g$ will not generally imply that $f_* = g_*$.
However, it is shown in \cite{Staecker20digital} that $f \simeq^* g$ implies $f_* = g_*$. Thus we obtain:
\begin{theorem}\label{shithm}
	Let $(X, \kappa)$ be a finite digital image and $f,g : (X, \kappa) \longrightarrow (X, \kappa)$ be strongly homotopic mappings. Then $L(f) = L(g)$.
\end{theorem}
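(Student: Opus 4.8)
The plan is to reduce the statement to the behavior of the induced homomorphisms on homology, since $L(f)$ depends on $f$ only through the traces $\tr(f_{*,q})$ of the maps $f_{*,q} : H_q^{\kappa}(X) \longrightarrow H_q^{\kappa}(X)$. Thus it suffices to prove that $f_{*,q} = g_{*,q}$ as endomorphisms of $H_q^{\kappa}(X)$ for each $q$, after which the equality of the two Lefschetz numbers will follow by a direct comparison of the defining sums.

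First I would appeal to the result of \cite{Staecker20digital} recalled immediately before the statement: strong homotopy $f \simeq^{*} g$ implies $f_* = g_*$ on simplicial homology. This is the crucial ingredient, and it is precisely where the \emph{strong} homotopy hypothesis is used. The ordinary relation $\simeq$ does not in general force $f_* = g_*$, which is exactly why $L(f)$ fails to be an ordinary homotopy invariant, as the text notes in correcting \cite{EgeKaraca13lefschetz}. Care must therefore be taken to check that the hypothesis is $\simeq^{*}$ and not merely $\simeq$, so that the cited result genuinely applies.

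With $f_{*,q} = g_{*,q}$ in hand for every $q$ in the range $0 \le q \le n$, the argument concludes formally. Equal endomorphisms have equal traces, so $\tr(f_{*,q}) = \tr(g_{*,q})$ for each $q$, and substituting into the definition of the simplicial Lefschetz number and summing against the signs $(-1)^q$ gives $L(f) = L(g)$.

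Because the substantive content is carried entirely by the cited fact that strong homotopy preserves the induced maps on simplicial homology, there is no real obstacle internal to this proof; the remaining steps are immediate consequences of the definition of $L$. The only point meriting attention is the one flagged above, namely that the conclusion hinges on strong homotopy invariance of $f_*$, in contrast with the failure of the analogous statement for the weaker homotopy relation.
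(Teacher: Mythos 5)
Your proposal is correct and matches the paper's own justification: the theorem there is derived directly from the cited fact in \cite{Staecker20digital} that $f \simeq^{*} g$ implies $f_{*} = g_{*}$ on simplicial homology, after which equality of the traces $\tr(f_{*,q})$ and of the alternating sums is immediate. You also correctly identify why the strong homotopy hypothesis is essential, in line with the paper's remark that ordinary homotopy does not preserve $f_{*}$.
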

We will demonstrate the simplicial Lefschetz number with a few examples.
\begin{figure}    
	\centering
	\resizebox{3.8cm}{2cm}{%
		\begin{tikzpicture}[scale=1.5]
		\begin{scope}[very thick, every node/.style={sloped,allow upside down}]
		\draw[step=1cm,gray,very thin] (0,0) grid (2,1);
		\draw [blue, thick] (0,1) -- node{\midarrow} node[left,rotate=90,blue]{$e_5$} (0,0);
		\draw [blue, thick] (0,0) -- node{\midarrow} node[below,blue]{$e_0$} (1,0);
		\draw [blue, thick] (1,0) -- node{\midarrow} node[below,blue]{$e_1$} (2,0);
		\node[red] () at (.5,.5) {$A_0$}; 
		\node[red] () at (1.5,.5) {$A_1$}; 
		\draw [blue, thick] (1,1) -- node{\midarrow} node[above,rotate=180,blue]{$e_4$} (0,1);
		\draw [blue, thick] (2,1) -- node{\midarrow} node[above,rotate=180,blue]{$e_3$} (1,1);
		\draw [blue, thick] (2,0) -- node{\midarrow} node[right,rotate=270,blue]{$e_2$} (2,1);
		\draw [blue, thick] (1,0) -- node{\midarrow} node[left,rotate=270,blue]{$e_6$} (1,1);
		
		
		
		\filldraw[black] (0,0) circle (1pt) node[left]{$y_0$};
		\filldraw[black] (0,1) circle (1pt) node[left]{$y_5$};
		\filldraw[black] (1,0) circle (1pt) node[below]{$y_1$};
		\filldraw[black] (2,0) circle (1pt) node[right]{$y_2$};
		\filldraw[black] (1,1) circle (1pt) node[above]{$y_4$};
		\filldraw[black] (2,1) circle (1pt) node[right]{$y_3$};	
		\end{scope}
		\end{tikzpicture}
	}
	\caption{{\bf A 2-dimensional digital image $(Y,4)$.} We allow $y_i$'s to represent the points and $e_i$'s to represent the adjacencies. The red arrows indicate the direction we adopt for the construction of the complex (see the examples below). $A_i$'s represent the 2-dimensional cubes (see examples below).}
	\label{fig2nohole}
\end{figure}
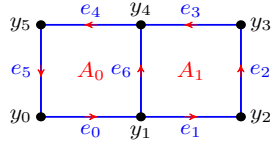
\begin{example}\label{ex_sim}
	Let $Y=\{y_0,\ldots,y_5\}$ with adjacency set $E= \{e_0,\ldots,e_6\}$ be the digital image of Fig. \ref{fig2nohole} and $f$ be the $180^{\circ}$ rotation map on $Y$. Then the simplicial Lefschetz number of $f$ is calculated as follows: By definition of $L(f)$ and Theorem \ref{hopf_trace}, we have 
	\[L(f) = \sum_{q=0}^n (-1)^q \tr(f_{q,*}) = \sum_{q=0}^n (-1)^q \tr(f_{q}).\]
	In dimensions 0 and 1, we have:
	$$
	f_0 =
	\begin{pmatrix}
	0&0&0&1&0&0\\
	0&0&0&0&1&0\\
	0&0&0&0&0&1\\
	1&0&0&0&0&0\\
	0&1&0&0&0&0\\
	0&0&1&0&0&0
	\end{pmatrix},
	\quad
	f_1 =
	\begin{pmatrix}
	0&0&0&1&0&0&0\\
	0&0&0&0&1&0&0\\
	0&0&0&0&0&1&0\\
	1&0&0&0&0&0&0\\
	0&1&0&0&0&0&0\\
	0&0&1&0&0&0&0\\
	0&0&0&0&0&0&-1
	\end{pmatrix}
	$$
	Hence $\tr(f_0) = 0$ and $\tr(f_1) = -1$. Since there are no simplices of dimension $2$ and above, then their simplicial homology is zero. Therefore, 
	\[ L(f)= 1(0)+(-1)(-1)=1. \]
\end{example} 
Note that all we care about are the traces of the matrices, not the matrices themselves. We can observe that $\tr(f_0)$ counts the number of fixed 0-simplices (vertices), $tr(f_1)$ gives an oriented count of the number of fixed 1-simplices (edges), $\tr(f_2)$ is an oriented count of the number of fixed 2-simplices, etc.
\begin{example}
	Let $Z=\{z_0,\ldots,z_7\}$ with adjacency set $E= \{e_0,\ldots,e_9\}$ be the digital image of Fig. \ref{fig3nohole} and $f$ be the $180^{\circ}$ rotation map on $Z$. Then the simplicial Lefschetz number of $f$ is calculated as follows: As before, we have
	\[L(f) = \sum_{q=0}^n (-1)^q \tr(f_q).\]
	For $0$-simplices and $1$-simplices, we have $\tr(f_0) = 0$ and $\tr(f_1) = 0$, because there are no fixed points (vertices) and no fixed edges respectively.
	Since there are no simplices of dimension 2 and higher, all other traces are zero. Therefore, 
	\[ L(f)= 1(0)+(-1)(0)=0. \]
\end{example} 
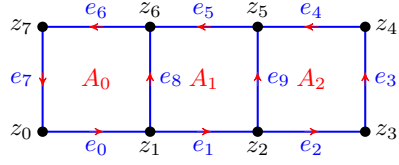
\begin{figure}    
	\centering
	\resizebox{5.5cm}{2.25cm}{%
		\begin{tikzpicture}[scale=1.5]
		\begin{scope}[very thick, every node/.style={sloped,allow upside down}]
		\draw[step=1cm,gray,very thin] (0,0) grid (3,1);
		\draw [blue, thick] (0,0) -- node{\midarrow} node[below,blue]{$e_0$} (1,0);
		\draw [blue, thick] (1,0) -- node{\midarrow} node[below,blue]{$e_1$} (2,0);
		\draw [blue, thick] (2,0) -- node{\midarrow} node[below,blue]{$e_2$} (3,0);
		\draw [blue, thick] (0,1) -- node{\midarrow} node[left,rotate=90,blue]{$e_7$} (0,0);
		\draw [blue, thick] (3,1) -- node{\midarrow} node[above, rotate=180,blue]{$e_4$} (2,1);
		\draw [blue, thick] (2,1) -- node{\midarrow} node[above,rotate=180,blue]{$e_5$} (1,1);
		\draw [blue, thick] (1,1) -- node{\midarrow} node[above,rotate=180,blue]{$e_6$} (0,1);
		\draw [blue, thick] (3,0) -- node{\midarrow} node[right,rotate=270,blue]{$e_3$} (3,1);
		\draw [blue, thick] (2,0) -- node{\midarrow} node[right,rotate=270,blue]{$e_9$} (2,1);
		\draw [blue, thick] (1,0) -- node{\midarrow} node[right,rotate=270,blue]{$e_8$} (1,1);
		\node[red] () at (.5,.5) {$A_0$}; 
		\node[red] () at (1.5,.5) {$A_1$};
		\node[red] () at (2.5,.5) {$A_2$}; 
		\filldraw[black] (0,0) circle (1pt) node[left]{$z_0$};
		\filldraw[black] (0,1) circle (1pt) node[left]{$z_7$};
		\filldraw[black] (1,0) circle (1pt) node[below]{$z_1$};
		\filldraw[black] (2,0) circle (1pt) node[below]{$z_2$};
		\filldraw[black] (1,1) circle (1pt) node[above]{$z_6$};
		\filldraw[black] (3,1) circle (1pt) node[right]{$z_4$};
		\filldraw[black] (3,0) circle (1pt) node[right]{$z_3$};
		\filldraw[black] (2,1) circle (1pt) node[above]{$z_5$};	
		\end{scope}
		\end{tikzpicture}
	}
	\caption{{\bf A 2-dimensional digital image $(Z,4)$.} We allow $z_i$'s to represent the points and $e_i$'s to represent the adjacencies. The red arrows indicate the direction we adopt for the construction of the complex (see the examples below). $A_i$'s represent the 2-dimensional cubes (see examples below).}
	\label{fig3nohole}
\end{figure}
\begin{example}
	Let $X=\{x_0,\ldots,x_{12}\}$ with adjacency set $E= \{e_0,\ldots,e_{13}\}$ be the digital image of Fig. \ref{fig2hole} and $f$ be the $180^{\circ}$ rotation map on $X$. 
	
	In this case there is one fixed vertex, no fixed edge, and no higher dimensional simplices. Thus we have $L(f)=1$.
\end{example} 
Of course the goal in defining a Lefschetz number is to obtain some sort of fixed point theorem. In our case when $L(f)$ is nonzero, we show that there is some simplex which maps to itself by $f$. This does not necessarily mean that there will be a fixed point, but it does imply that there is an approximate fixed point (that is, some $x$ with $f(x)$ adjacent or equal to $x$).
\begin{theorem}\label{sthm1}
	Let $(X,\kappa)$ be a finite digital image and $f:(X,\kappa) \longrightarrow (X,\kappa)$ be a self-map with $L(f)\neq 0$. Then there is some simplex $\sigma \subseteq X$ with $f(\sigma) = \sigma$.
\end{theorem}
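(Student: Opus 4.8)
The plan is to reduce the statement to a direct inspection of the traces of the simplicial chain maps $f_q$, rather than the induced homology maps $f_{*,q}$. First I would invoke the Hopf Trace Formula (Theorem \ref{hopf_trace}), which applies because $f_q$ is a chain map by Lemma \ref{lemchain}. This gives
\[ L(f) = \sum_{q=0}^n (-1)^q \tr(f_{*,q}) = \sum_{q=0}^n (-1)^q \tr(f_q). \]
Consequently the hypothesis $L(f) \neq 0$ forces $\tr(f_q) \neq 0$ for at least one value of $q$.

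Next I would compute $\tr(f_q)$ in the natural basis of $C_q^{\kappa}(X)$. Fixing one ordered representative $\sigma = \langle x_0, \ldots, x_q \rangle$ for each unordered $q$-simplex yields a free basis. By Definition \ref{defsm}, $f_q(\sigma) = \langle f(x_0), \ldots, f(x_q) \rangle$, which is either $0$ (when the images collapse to fewer than $q+1$ points) or equals $\pm \tau$ for the basis element $\tau$ associated to the simplex $\{f(x_0), \ldots, f(x_q)\}$. Hence the diagonal entry of $f_q$ at $\sigma$ can be nonzero only when $\tau = \sigma$ as unordered simplices, i.e. when $\{f(x_0), \ldots, f(x_q)\} = \{x_0, \ldots, x_q\}$, which is precisely the condition $f(\sigma) = \sigma$.

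Finally, since $\tr(f_q)$ is the sum of these diagonal entries, the inequality $\tr(f_q) \neq 0$ guarantees that at least one diagonal entry is nonzero. By the preceding step, the corresponding $q$-simplex $\sigma$ satisfies $f(\sigma) = \sigma$, completing the argument.

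The step requiring the most care is the second one: verifying that a nonzero diagonal entry genuinely forces $f$ to permute the vertices of $\sigma$ (equivalently, $f(\sigma) = \sigma$ as a set). This rests on the observation that each $f_q(\sigma)$ is a single signed basis element or zero, so the coefficient of $\sigma$ in $f_q(\sigma)$ can be nonzero only when the two underlying point sets coincide; the sign $(-1)^{\rho}$ arising from reordering the image vertices affects the value but not the vanishing of that coefficient. Everything else is formal, so I do not anticipate a serious obstacle beyond this bookkeeping. It is also worth noting in passing what the theorem does \emph{not} assert: $f(\sigma) = \sigma$ means $f$ maps the simplex to itself as a set, which need not produce an honest fixed point, but any vertex of such a $\sigma$ is adjacent or equal to its image and hence is an approximate fixed point.
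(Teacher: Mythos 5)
Your proposal is correct and follows essentially the same route as the paper's own proof: both apply the Hopf Trace Formula (Theorem \ref{hopf_trace}, justified by Lemma \ref{lemchain}) to replace $\tr(f_{*,q})$ by $\tr(f_q)$, and then observe that a nonzero trace on the chain level forces a nonzero diagonal entry, i.e.\ some simplex $\sigma$ with $f_q(\sigma) = \pm\sigma$ and hence $f(\sigma) = \sigma$. The paper states this in two sentences; your write-up merely fills in the basis-level bookkeeping (one ordered representative per unordered simplex, diagonal entries in $\{0,\pm 1\}$), which is a faithful elaboration rather than a different argument.
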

\begin{proof}
	Since $L(f)\neq 0$ it implies that $\sum_{q=0}^n (-1)^q \tr(f_q) \neq 0$, where $f_q :C_q(X)\longrightarrow C_q(X)$ is the induced map on the simplicial chain groups. This further implies that there exist some dimension $q$ and some simplex $\sigma \subseteq X$ such that $f_q(\sigma) = \pm \sigma$, and so $f(\sigma)=\sigma$.
\end{proof}
\begin{corollary}\label{fixedorapprox}
	Let $(X, \kappa)$ be a finite digital image and $f : (X, \kappa) \longrightarrow (X, \kappa)$ be a self-map with $L(f) \not= 0.$ 
	Then either $f$ has a fixed point or $f$ has at least $2$ approximate fixed points.
\end{corollary}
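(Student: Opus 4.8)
The plan is to reduce the statement entirely to Theorem \ref{sthm1}, which already supplies a simplex $\sigma \subseteq X$ with $f(\sigma) = \sigma$; the remaining work is a short case analysis on the number of vertices of $\sigma$.

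First I would record the structural consequence of $f(\sigma)=\sigma$. A $q$-simplex $\sigma$ is, by definition, a set of $q+1$ mutually adjacent points, and since $X$ is finite the relation $f(\sigma)=\sigma$ says that $f$ carries the vertex set of $\sigma$ onto itself; in particular $f(v) \in \sigma$ for every vertex $v$ of $\sigma$.

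Next I would split into two cases according to $\dim \sigma$. If $\sigma$ is a single point, say $\sigma = \{x\}$, then $f(\sigma)=\sigma$ forces $f(x)=x$, so $f$ has a fixed point and we are done. Otherwise $\sigma$ has at least two vertices. Here the key observation is that any two distinct vertices of a simplex are $\kappa$-adjacent, precisely because a simplex is a mutually adjacent set. Hence for each vertex $v$ of $\sigma$ we have $f(v)\in\sigma$, so either $f(v)=v$ or $f(v)\leftrightarrow_{\kappa} v$; in either case $f(v)\Leftrightarrow_{\kappa} v$, and therefore $v$ is an approximate fixed point. Since $\sigma$ has at least two vertices, this exhibits at least two approximate fixed points of $f$.

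I do not expect a genuine obstacle here: all of the topological difficulty has been absorbed into Theorem \ref{sthm1}, and the only point requiring care is the elementary observation that the vertices of a single simplex are pairwise adjacent, which is exactly what converts each vertex into an approximate fixed point. One minor bookkeeping remark is that the two cases are exhaustive and match the disjunction in the statement, since the fixed-point case covers $\dim\sigma = 0$ while the approximate-fixed-point case covers $\dim\sigma \ge 1$; the ``or'' is inclusive, so there is no conflict if $f$ happens to fix a vertex of a higher-dimensional $\sigma$ as well.
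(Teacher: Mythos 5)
Your proof is correct and takes essentially the same approach as the paper's: both invoke Theorem \ref{sthm1} to obtain a simplex $\sigma$ with $f(\sigma)=\sigma$ and then split on whether $\dim\sigma=0$ or $\dim\sigma\ge 1$. If anything, your handling of the higher-dimensional case is slightly tidier, since you observe directly that every vertex $v$ of $\sigma$ satisfies $f(v)\Leftrightarrow_{\kappa} v$ (a fixed vertex being in particular an approximate fixed point), whereas the paper subdivides that case once more.
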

\begin{proof}
	From Theorem \ref{sthm1}, if the fixed simplex is of dimension $0$, then $f$ has a fixed point. While if the fixed simplex is of higher dimension, then we have two cases; either some point is fixed by $f$, or all points of the simplex are approximate fixed points, and there will be at least $2$ of them.
\end{proof}
Now, as a consequence of Corollary \ref{fixedorapprox} and Theorem \ref{shithm} we have:
\begin{corollary}\label{sthm}
	Let $(X, \kappa)$ be a finite digital image and $f : (X, \kappa) \longrightarrow (X, \kappa)$ be a self-map with $L(f) \not= 0.$ 
	Then any map strong homotopic to $f$ has a fixed point or at least two approximate fixed points.
\end{corollary}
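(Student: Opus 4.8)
The plan is to combine the strong homotopy invariance of the simplicial Lefschetz number with the fixed-point dichotomy already established for a single map. Concretely, I would begin by fixing an arbitrary map $g : (X,\kappa) \longrightarrow (X,\kappa)$ that is strongly homotopic to $f$, so that $g \simeq^{*} f$. The goal is then to transfer the hypothesis $L(f) \neq 0$ from $f$ to $g$, and to apply the earlier dichotomy to $g$ rather than to $f$.

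The first step is to invoke Theorem \ref{shithm}, the strong homotopy invariance of $L$. Since $g \simeq^{*} f$, this immediately gives $L(g) = L(f)$, and because $L(f) \neq 0$ by hypothesis, we conclude $L(g) \neq 0$ as well. This is the crucial point: although $L$ is \emph{not} invariant under ordinary homotopy (as emphasized in the discussion preceding Theorem \ref{shithm}, where $f \simeq g$ need not force $f_{*} = g_{*}$), it \emph{is} invariant under the stronger $NP_2$-homotopy relation, which is precisely the relation $\simeq^{*}$ appearing in the statement. The whole force of the corollary rests on using this stronger relation.

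The second step is simply to apply Corollary \ref{fixedorapprox} to the map $g$. Since $L(g) \neq 0$, that corollary yields that either $g$ has a fixed point or $g$ has at least two approximate fixed points. As $g$ was an arbitrary map strongly homotopic to $f$, this is exactly the assertion to be proved.

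The main (and essentially only) subtlety here is bookkeeping: one must be careful to phrase the invariance in terms of the strong relation $\simeq^{*}$ rather than the weaker $\simeq$, and to note that the conclusion of Corollary \ref{fixedorapprox} depends only on the nonvanishing $L(g) \neq 0$ and not on any further property of $f$ or on a direct homotopy to $f$. Both of these are already guaranteed by the cited results, so no genuine obstacle arises; the corollary is an immediate consequence of chaining Theorem \ref{shithm} and Corollary \ref{fixedorapprox}.
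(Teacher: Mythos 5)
Your proposal is correct and is exactly the paper's argument: the corollary is stated there as an immediate consequence of Theorem \ref{shithm} (strong homotopy invariance, giving $L(g)=L(f)\neq 0$ for any $g \simeq^{*} f$) combined with Corollary \ref{fixedorapprox} applied to $g$. Nothing is missing, and your remark that ordinary homotopy would not suffice matches the paper's own emphasis.
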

%
\begin{figure}    
	\centering
	\resizebox{6cm}{3cm}{%
		\begin{tikzpicture}[scale=1.5]
		\begin{scope}[very thick, every node/.style={sloped,allow upside down}]
		\draw[step=1cm,gray,very thin] (0,0) grid (4,2);
		\draw [blue, thick] (0,1) -- node{\midarrow} node[left,rotate=90,blue]{$e_{11}$} (0,0);
		\draw [blue, thick] (0,2) -- node{\midarrow} node[left,rotate=90,blue]{$e_{10}$} (0,1);
		\draw [blue, thick] (0,0) -- node{\midarrow} node[above,blue]{$e_0$} (1,0);
		\draw [blue, thick] (1,0) -- node{\midarrow} node[above,blue]{$e_1$} (2,0);
		\draw [blue, thick] (2,0) -- node{\midarrow} node[above,blue]{$e_2$} (3,0);
		\draw [blue, thick] (3,0) -- node{\midarrow} node[above,blue]{$e_3$} (4,0);	
		\draw [blue, thick] (4,2) -- node{\midarrow} node[below,rotate=180,blue]{$e_6$} (3,2);
		\draw [blue, thick] (3,2) -- node{\midarrow} node[below,rotate=180,blue]{$e_7$} (2,2);
		\draw [blue, thick] (2,2) -- node{\midarrow} node[below,rotate=180,blue]{$e_8$} (1,2);
		\draw [blue, thick] (1,2) -- node{\midarrow} node[below,rotate=180,blue]{$e_9$} (0,2);
		\draw [blue, thick] (4,1) -- node{\midarrow} node[right,rotate=270,blue]{$e_5$} (4,2);
		\draw [blue, thick] (4,0) -- node{\midarrow} node[right,rotate=270,blue]{$e_4$} (4,1);
		\draw [blue, thick] (2,1) -- node{\midarrow} node[left,rotate=270,blue]{$e_{13}$} (2,2);
		\draw [blue, thick] (2,0) -- node{\midarrow} node[left,rotate=270,blue]{$e_{12}$} (2,1);
		\filldraw[black] (0,0) circle (1.25pt) node[below]{$x_{0}$};
		\filldraw[black] (0,1) circle (1.25pt) node[left]{$x_{11}$};
		\filldraw[black] (0,2) circle (1.25pt) node[above]{$x_{10}$};
		\filldraw[black] (1,0) circle (1.25pt) node[below]{$x_1$};
		\filldraw[black] (2,0) circle (1.25pt) node[below]{$x_2$};
		\filldraw[black] (3,0) circle (1.25pt) node[below]{$x_3$};
		\filldraw[black] (4,0) circle (1.5pt) node[below]{$x_4$};
		\filldraw[black] (1,2) circle (1.25pt) node[above]{$x_9$};
		\filldraw[black] (2,2) circle (1.25pt) node[above]{$x_8$};
		\filldraw[black] (3,2) circle (1.25pt) node[above]{$x_7$};
		\filldraw[black] (4,2) circle (1.25pt) node[above]{$x_6$};
		\filldraw[black] (2,1) circle (1.25pt) node[right]{$x_{12}$};
		\filldraw[black] (4,1) circle (1.25pt) node[right]{$x_5$};
		\end{scope}
		\end{tikzpicture}
	}
	\caption{{{\bf A 2-dimensional digital image $(X,4)$.} We allow $x_i$'s to represent the points and $e_i$'s to represent the adjacencies. The red arrows indicate the direction we adopt for the construction of the complex (see the examples below). $A_i$'s represent the 2-dimensional cubes (see examples below).}}
	\label{fig2hole}
\end{figure}
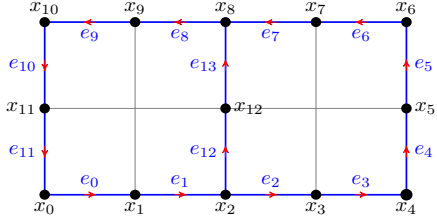
\begin{remark}
	Example \ref{ex_sim} demonstrates that $L(f)\neq 0$ will not in general imply the existence of a fixed point.
\end{remark}
\begin{theorem}\label{shafpthmid}
	Let $f$ be strong homotopic to identity and the simplicial Euler characteristic $\chi(X)$ be nonzero, 
	then $f$ has a fixed point or at least two approximate fixed points.
\end{theorem}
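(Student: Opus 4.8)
The plan is to chain together three results already established in the excerpt, since the statement is essentially tailored to follow from them. The key observation is that although the simplicial Lefschetz number $L(f)$ is \emph{not} invariant under ordinary homotopy, it \emph{is} invariant under strong homotopy (Theorem \ref{shithm}), and the hypothesis here is precisely that $f \simeq^* id$.

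First I would compute $L(f)$. Because $f$ is strongly homotopic to the identity, Theorem \ref{shithm} gives $L(f) = L(id)$. By Example \ref{ex_id_euler} we have $L(id) = \chi(X)$, since the identity fixes every $q$-simplex and so $\tr(id_q) = \dim C_q(X)$ for each $q$. Combining these two facts yields $L(f) = \chi(X)$. The hypothesis $\chi(X) \neq 0$ then forces $L(f) \neq 0$.

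With $L(f) \neq 0$ in hand, I would invoke Corollary \ref{fixedorapprox} directly: it asserts that any self-map with nonzero simplicial Lefschetz number either has a fixed point or has at least two approximate fixed points, which is exactly the desired conclusion. (Alternatively, one could bypass the explicit computation of $L(f)$ and instead apply Corollary \ref{sthm} to the identity map, noting $L(id) = \chi(X) \neq 0$ and that $f$ is strongly homotopic to $id$; this routes through the same underlying facts.)

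I do not expect a genuine obstacle here, as the argument is a short deduction from prior results. The only point demanding care is the use of \emph{strong} homotopy invariance rather than ordinary homotopy invariance: the identity $L(f)=L(id)$ depends essentially on $f \simeq^* id$ and would fail for a map merely homotopic to the identity, since $f \simeq g$ does not in general imply $f_* = g_*$ in the simplicial theory.
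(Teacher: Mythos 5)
Your proposal is correct and matches the paper's own proof step for step: the paper likewise uses strong homotopy invariance (Theorem \ref{shithm}) together with $L(id)=\chi(X)$ (Example \ref{ex_id_euler}) to get $L(f)=\chi(X)\neq 0$, and then concludes via Corollary \ref{fixedorapprox}. Your version is in fact slightly more careful in citing exactly which prior results justify each equality.
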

\begin{proof}
	Since $f$ is strong homotopic to identity, we have $L(f) = L(id) = \chi(X)$. Now, by Corollary \ref{fixedorapprox} and the fact that $\chi(X)\neq 0$, 
	we conclude that $f$ has a fixed point or at least two approximate fixed points.
\end{proof}
\begin{theorem}\label{strngcontrb}
	Let $X$ be strongly contractible digital image, 
	then any function $f$ on $X$ 
	has a fixed point or at least two approximate fixed points.
\end{theorem}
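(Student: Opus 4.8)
The plan is to show that every continuous self-map $f$ on a strongly contractible image $X$ satisfies $L(f)=1$, and then to apply Corollary \ref{fixedorapprox}. By \emph{strongly contractible} we mean that $X$ has the strong homotopy type of a single point; equivalently (taking the constant composite $g\circ f$ in Definition \ref{strng_equitype}), the identity $id_X$ is strongly homotopic to some constant map $c$, say $c(x)=x_0$ for all $x\in X$. Fix a strong homotopy $H^{*}: X \times [0,k]_{\Z} \longrightarrow X$ realizing $id_X \simeq^{*} c$, so that $H^{*}$ is $(NP_2(\kappa,c_1),\kappa)$-continuous with $H^{*}(x,0)=x$ and $H^{*}(x,k)=x_0$ for every $x$.

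First I would verify that post-composition with a continuous map preserves strong homotopy. Given the continuous self-map $f:(X,\kappa)\to(X,\kappa)$, the composite $f\circ H^{*}: X \times [0,k]_{\Z}\to X$ is again $(NP_2(\kappa,c_1),\kappa)$-continuous, being the composite of the $(NP_2(\kappa,c_1),\kappa)$-continuous map $H^{*}$ with the $\kappa$-continuous map $f$. Hence $f\circ H^{*}$ is itself a strong homotopy, and it satisfies $(f\circ H^{*})(x,0)=f(x)$ and $(f\circ H^{*})(x,k)=f(x_0)$. Therefore $f$ is strongly homotopic to the constant map $c_{f(x_0)}$ at the point $f(x_0)$.

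With this in place the conclusion follows quickly. Theorem \ref{shithm} gives $L(f)=L(c_{f(x_0)})$, and Example \ref{ex6} records that the simplicial Lefschetz number of any constant map equals $1$. Thus $L(f)=1\neq 0$, and Corollary \ref{fixedorapprox} immediately yields that $f$ has a fixed point or at least two approximate fixed points.

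The only genuinely substantive step is the preservation of strong homotopy under post-composition, which I expect to be the main (if minor) obstacle, since it requires unwinding the $NP_2$ adjacency condition rather than treating strong homotopy as a black box: one checks that whenever $(x,s)$ and $(x',s')$ are $NP_2(\kappa,c_1)$-adjacent one has $H^{*}(x,s)\Leftrightarrow_{\kappa} H^{*}(x',s')$, and that the $\kappa$-continuous $f$ preserves this adjacency-or-equality. It is worth emphasizing that the \emph{strong} hypothesis is essential not at this preservation step (ordinary homotopies are equally well preserved under post-composition) but at the invariance step, since by Theorem \ref{shithm} the number $L$ is guaranteed invariant only under strong homotopy; under mere contractibility the argument would break exactly where, as the paper notes, $f\simeq g$ fails to force $f_{*}=g_{*}$.
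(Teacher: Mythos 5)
Your proposal is correct and follows essentially the same route as the paper's own proof: strong contractibility gives $f \simeq^{*} c$ for a constant map $c$, so $L(f)=L(c)=1\neq 0$ by Theorem \ref{shithm} and Example \ref{ex6}, and Corollary \ref{fixedorapprox} concludes. The only difference is that you explicitly verify the step the paper leaves implicit---that post-composing the strong contraction $H^{*}$ with $f$ yields a strong homotopy from $f$ to a constant---and that verification is correct.
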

\begin{proof}
	Let $f$ be a map on $X$. Since $X$ is strongly contractible, then $f$ is strong homotopic to a constant map $c$. By Theorem \ref{shithm} and Example \ref{ex6}, we have $L(f) = L(c) = 1$ which implies that $L(f) \not= 0.$ Hence, by Corollary \ref{fixedorapprox} 
	we conclude that $f$ has a fixed point or at least two approximate fixed points.
\end{proof}

\section{Cubical Lefschetz Number}
In this section, we define a ``cubical Lefschetz number'' by using $c_1$-cubical homology in place of simplicial homology. We will illustrate with some examples that the cubical Lefschetz number $\bar L(f)$ can be different from the simplicial Lefschetz number $L(f)$ defined in the preceding section.

As was earlier mentioned in Section 2.3, cubical homology was defined in \cite{JamilAli19digital} for an image of any adjacency $\kappa$, where the cubes are $(c_1,\kappa)$-continuous functions $\lbrack 0,1\rbrack_\Z^n \longrightarrow X.$ The $c_1$-cubical homology defined in \cite{Staecker20digital} is more restrictive in that it requires $c_1$ adjacency to be used in $X$, but it is much easier to compute than the theory of \cite{JamilAli19digital}.

Since we are going to be using the $c_1$-cubical homology theory. We will denote the ``cubical Lefschetz number" as $\bar L(f)$.
\begin{definition}
	Let $X \subset \mathbb{Z}^n$ be a digital image with $c_1$-adjacency and $n \leq 4$. For a map $f : (X, c_1) \longrightarrow (X, c_1),$ we define the cubical Lefschetz number, denoted by $\bar L(f)$, as follows:
	\[\bar L(f) = \sum_{q=0}^n (-1)^q \tr(\bar f_{*,q}),\]
	where $\bar f_{*,q} : \bar H_q(X) \longrightarrow \bar H_q(X).$
\end{definition}
Computing the above number is made easier by using $\bar C_q(X)$ as guaranteed by Theorem \ref{hopf_trace}. However, for us to use Theorem \ref{hopf_trace}, we need $\bar f_q$ to be a chain map. This restricts us to only when $n \leq 4,$ since the induced homomorphism $\bar f_q$ is a chain map only up to dimension $4$, as was proved in  \cite{Staecker20digital}.

Similar to the simplicial section, we can easily compute the cubical Lefschetz number of a constant.
\begin{example}\label{ex7}
	Let $(X, c_1)$ be a finite digital image and $c : (X, c_1) \longrightarrow (X, c_1)$ be the constant mapping. Then $\bar L(c) = 1$. Because, for a constant map $\tr \bar f_{*,q} = 1$ in dimension 0, and vanishes elsewhere.
\end{example}
As was the case in the preceding section, the goals of defining the cubical Lefschetz number are: to obtain a homotopy invariance property and to establish some sort of fixed point theorem. We will see that the homotopy invariance property in the cubical setting is stronger than the simplicial setting, but the fixed point result is weaker.
\begin{theorem}\label{homoinvac1}\cite{Staecker20digital}
	Let $X \subset \mathbb{Z}^n$ and $Y \subset \mathbb{Z}^m$ be digital images both with $c_1$-adjacency. Let $n \leq 3$ and $f, g : X \longrightarrow Y$ be homotopic mappings. Then for each $q$, the induced homomorphisms $\bar f_*, \bar g_* :
	\bar H_q (X) \longrightarrow \bar H_q (Y)$ are equal.
\end{theorem}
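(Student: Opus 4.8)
The plan is to mimic the classical proof that homotopic maps induce the same map on homology, by constructing a chain homotopy (a ``prism operator'') between $\bar f$ and $\bar g$. First I would reduce to the case of a one-step homotopy: a homotopy $H : X \times [0,m]_\Z \to Y$ restricts on each slab $X \times [t,t+1]_\Z$ to a homotopy between the intermediate maps $H_t$ and $H_{t+1}$, so if the result is known for one-step homotopies, then chaining the equalities $(\bar H_t)_* = (\bar H_{t+1})_*$ from $t=0$ to $t=m$ yields $\bar f_* = \bar g_*$. Hence I may assume $H : X \times [0,1]_\Z \to Y$ with $H(\cdot,0)=f$ and $H(\cdot,1)=g$.

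The key structural observation is that $H$ may be regarded as a $c_1$-continuous map between $c_1$-images. Indeed, by Theorem \ref{hthm}, $H$ is $(NP_1(c_1,c_1),c_1)$-continuous, and on the product $X \times [0,1]_\Z \subset \Z^{n+1}$ the adjacency $NP_1(c_1,c_1)$ coincides with the $c_1$-adjacency of $\Z^{n+1}$: two points are $NP_1$-adjacent exactly when they differ by $\pm 1$ in at most one of the $n+1$ coordinates and agree in the rest. This is precisely where the hypothesis $n\le 3$ is essential: the prism $X \times [0,1]_\Z$ sits in $\Z^{n+1}$ with $n+1 \le 4$, so Theorem \ref{c1chainmap} applies and guarantees that the induced homomorphism $\bar H_q : \bar C^{c_1}_q(X \times [0,1]_\Z) \to \bar C^{c_1}_q(Y)$ is a chain map. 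Were $n=4$, the prism would live in dimension $5$, where the chain-map property is only conjectural.

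I would then define the prism operator $D : \bar C^{c_1}_q(X) \to \bar C^{c_1}_{q+1}(Y)$ on a $q$-cube $\sigma$ by $D(\sigma) = \bar H_{q+1}(\sigma \times [0,1]_\Z)$, where $\sigma \times [0,1]_\Z$ is the $(q+1)$-cube obtained by adjoining the nondegenerate time interval. The heart of the argument is the boundary computation for this prism cube: separating the new time direction from the original nondegenerate directions gives
\[ \partial(\sigma \times [0,1]_\Z) = (\partial \sigma)\times[0,1]_\Z + (-1)^{q+1}\big(\sigma\times\{0\} - \sigma\times\{1\}\big), \]
so applying $\bar H$, using that it is a chain map and that $\bar H$ restricts to $\bar f$ on the bottom face $\sigma \times \{0\}$ and to $\bar g$ on the top face $\sigma \times \{1\}$, yields the chain-homotopy identity
\[ \partial D(\sigma) - D(\partial\sigma) = (-1)^{q+1}\big(\bar f_q - \bar g_q\big)(\sigma). \]
Evaluating on a cycle $z$, where $\partial z = 0$, shows that $\bar f_q(z) - \bar g_q(z)$ is a boundary, whence $\bar f_*$ and $\bar g_*$ agree on $\bar H_q(X)$.

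The step I expect to be the main obstacle is the boundary computation and the attendant sign bookkeeping: in the cubical setting one must carefully track the face operators $A_i, B_i$ together with the signs $(-1)^i$ as the nondegenerate index set grows by one, and one must confirm that the ``interpret degenerate images as $0$'' convention built into $\bar H_q$ is compatible with the collapse of prism faces (for instance when $H$ identifies adjacent time slices). Making the informal decomposition above precise---verifying that the side faces assemble exactly into $(\partial\sigma)\times[0,1]_\Z$ and that the signs match---is the genuinely technical part; once the chain homotopy is in hand, the homological conclusion is routine.
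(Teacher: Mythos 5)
This theorem appears in the paper only as a citation to \cite{Staecker20digital} with no in-text proof, and your argument is essentially the proof given in that reference: reduce to a one-step homotopy, observe that $H$ is then a $c_1$-continuous map on the cylinder $X \times [0,1]_\Z \subset \Z^{n+1}$ (since $NP_1(c_1,c_1)$ coincides with $c_1$ there), so that the hypothesis $n \le 3$ is exactly what allows Theorem \ref{c1chainmap} to make $\bar H_q$ a chain map in dimension $n+1 \le 4$, and then run the cubical prism operator. Your sign bookkeeping is also consistent with the paper's boundary convention $\partial_q(\sigma) = \sum_{i=1}^q (-1)^i (A_{j_i}\sigma - B_{j_i}\sigma)$ with the time interval entering as the $(q+1)$-st nondegenerate factor, so the identity $\partial D(\sigma) - D(\partial\sigma) = (-1)^{q+1}(\bar f_q - \bar g_q)(\sigma)$ and the evaluation on cycles go through exactly as you describe.
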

Theorem \ref{homoinvac1} consequently, gives us the following:
\begin{theorem}\label{bhithm}
	Let $X \subset \mathbb{Z}^n$ be a digital image with $c_1$-adjacency and $n \leq 3$, and let $f,g : (X, c_1) \longrightarrow (X, c_1)$ be homotopic mappings. Then $\bar L(f) = \bar L(g)$.
\end{theorem}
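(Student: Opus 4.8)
The plan is to deduce this directly from the homotopy invariance of $c_1$-cubical homology recorded in Theorem \ref{homoinvac1}. The cubical Lefschetz number is defined purely in terms of the induced maps $\bar f_{*,q}$ on homology, so once we know that homotopic maps induce the \emph{same} homomorphisms on each $\bar H_q(X)$, equality of the Lefschetz numbers is immediate. In other words, all of the substance sits in the cited theorem, and what remains is a formal assembly.

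First I would observe that since $f \simeq g$ and $n \le 3$, Theorem \ref{homoinvac1}, applied in the special case $Y = X$, gives $\bar f_{*,q} = \bar g_{*,q}$ as endomorphisms of $\bar H_q(X)$ for every $q$. Because these are literally the same homomorphism on each homology group, they must have the same trace, so $\tr(\bar f_{*,q}) = \tr(\bar g_{*,q})$ for each $q$. Finally I would assemble the alternating sum
\[
\bar L(f) = \sum_{q=0}^n (-1)^q \tr(\bar f_{*,q}) = \sum_{q=0}^n (-1)^q \tr(\bar g_{*,q}) = \bar L(g),
\]
which is exactly the desired conclusion.

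There is no genuine obstacle in this argument, since the hard work is entirely contained in Theorem \ref{homoinvac1}, whose proof lives in \cite{Staecker20digital}. The one point worth flagging is the hypothesis $n \le 3$: this is inherited from that theorem rather than from the definition of $\bar L$. Note that the chain-map property of Theorem \ref{c1chainmap} is available for $n \le 4$, which is all that is needed to \emph{define} $\bar L(f)$ via the Hopf trace formula; but the homotopy invariance of the induced maps on homology is only guaranteed for $n \le 3$. This mismatch is precisely why the present theorem must be stated with the stronger dimension restriction than the definition requires, and I would make that dependence explicit in the write-up so the reader is not puzzled by the change from $n \le 4$ to $n \le 3$.
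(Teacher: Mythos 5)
Your proposal is correct and follows exactly the same route as the paper's own proof: apply Theorem \ref{homoinvac1} with $Y = X$ to get $\bar f_{*,q} = \bar g_{*,q}$, then equate the alternating sums of traces. Your added remark about why the hypothesis is $n \le 3$ here (homotopy invariance) versus $n \le 4$ in the definition (chain-map property from Theorem \ref{c1chainmap}) is a correct and worthwhile clarification, though the paper does not spell it out.
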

\begin{proof}
	By Theorem \ref{homoinvac1}, for any two homotopic mappings their induced homomorphisms are equal. This implies that the alternating sum of their traces will also be equal. Hence $\bar L(f) = \bar L(g)$ as required.
\end{proof}
From Theorem \ref{hopf_trace}, we have
\[\bar L(f) = \sum_{q=0}^n (-1)^q \tr(\bar f_q) = \sum_{q=0}^n (-1)^q \tr(\bar f_{*,q}).\]
The above relation guarantees that we can now use the cubical chain groups to calculate the cubical Lefschetz number, which is much easier to do than using the cubical homology groups. \newline

Now, we will introduce the definition of cubical Euler characteristics for digital images. More precisely, we give the following definition.
\begin{definition}\label{ceuler}
	Let $X$ be a digital image and $n$ be the maximal dimension of any cube in $X$. We define the \emph{cubical Euler characteristic} of $X$ as:
	\[\bar \chi(X) = \sum_{q=0}^n (-1)^q \dim \bar H_q(X) = \sum_{q=0}^n (-1)^q \dim \bar C_q(X).\]
\end{definition}
The above equality holds, since we are using Theorem \ref{hopf_trace} on the identity mapping which obviously is a chain map in all dimensions. Therefore, the earlier requirement that $n \leq 4$ does not arise.
\begin{example}\label{expCEX}
	Let $(Y,c_1)$ be the digital image of Fig. \ref{fig2nohole} and $(Z,c_1)$ be the digital image of Fig. \ref{fig3nohole}. Then $\bar \chi(Y) = \bar \chi(Z) = 1$, while $\chi(Y) = -1$ and $\chi(Z)=-2$. Thus the cubical and simplicial Euler characteristics can differ.
\end{example}
\begin{example}\label{ex_id_ceuler}
	Let $(X, c_1)$ be a finite digital image and $id : (X, c_1) \longrightarrow (X, c_1)$ be the identity mapping. Then, $\bar L(id) = \bar \chi(X)$.
\end{example}
Although as highlighted earlier, we only care about the traces of the matrices not the matrices themselves, in the following example we will use the matrices method for the reader to see what happens with the cubical homology.
\begin{example}
	Let $(Y,c_1)$ be the digital image of Fig. \ref{fig2nohole}, and $f$ be the $180^{\circ}$ rotation map on $Y$. Then the cubical Lefschetz number of $f$ is calculated as follows: By definition of $\bar L(f)$ and Theorem \ref{hopf_trace}, we have 
	\[\bar L(f) = \sum_{q=0}^n (-1)^q \tr(\bar f_{q,*}) = \sum_{q=0}^n (-1)^q \tr(\bar f_q). \]
	We have:
	$$	\bar f_0 = \begin{pmatrix}
	0&0&0&1&0&0\\
	0&0&0&0&1&0\\
	0&0&0&0&0&1\\
	1&0&0&0&0&0\\
	0&1&0&0&0&0\\
	0&0&1&0&0&0
	\end{pmatrix}, \quad
	\bar f_1 =
	\begin{pmatrix}
	0&0&0&1&0&0&0\\
	0&0&0&0&1&0&0\\
	0&0&0&0&0&1&0\\
	1&0&0&0&0&0&0\\
	0&1&0&0&0&0&0\\
	0&0&1&0&0&0&0\\
	0&0&0&0&0&0&-1
	\end{pmatrix}, \quad
	\bar f_2 =
	\begin{pmatrix}
	0&1\\
	1&0
	\end{pmatrix}	$$
	Hence $\tr(\bar f_0) = 0$, $\tr(\bar f_1) = -1$, and $\tr(\bar f_2) = 0$. Therefore, 
	\[ \bar L(f) = 1(0)+(-1)(-1)+1(0)=1. \]
\end{example} 
Note that similar to the simplicial homology, we can observe that $\tr( \bar f_0)$ counts the number of fixed 0-cubes (vertices), $\tr(\bar f_1)$ is an oriented count of the number of fixed 1-cubes (edges), $\tr(\bar f_2)$ is an oriented count of the number of fixed 2-cubes, etc.
\begin{example}\label{exp_lbar}
	Let $(Z,c_1)$ be the digital image of Fig. \ref{fig3nohole}, and $f$ be the $180^{\circ}$ rotation map on $Z$. 
	%
	For $0$-cubes, we have $\tr(\bar f_0) = 0$ because there is no fixed vertex ($0$-cube). For $1$-cubes, we obtain $\tr(\bar f_1) = 0$ due to the absence of any fixed $1$-cube.
	For $2$-cubes, we have $\tr(\bar f_2) = 1$ as the function $f$ maps the $2$-cube $A_1$ onto itself with the same orientation. Now, as there are no higher dimensional cubes their $c_1$-cubical homology is zero.
	Hence, 
	\[ \bar L(f)=1(0)+(-1)(0)+1(1)=1. \]
\end{example} 

\begin{example}
	Let $(X,c_1)$ be the digital image of Fig. \ref{fig2hole}, and let $f$ be the $180^{\circ}$ rotation map on $X$. 
	For $0$-cubes, we have $\tr(\bar f_0) = 1$ due to the existence of a fixed vertex ($0$-cube). 
	For $1$-cubes, $\tr(\bar f_1) = 0$ because there is no fixed $1$-cube.
	Since there are no $2$-cubes and higher, their $c_1$-cubical homology is zero. Therefore, 
	\[ \bar L(f)= 1(1)+(-1)(0)=1. \]
\end{example}
For the existence of fixed points, when $\bar L(f)$ is nonzero, we will show that $f$ maps some elementary cube to itself. But this does not necessarily guarantee that we will have a fixed point or even an approximate fixed point. For example, an elementary $2$-cube can map to itself by a $180^{\circ}$ rotation, having no fixed or approximate fixed points. However, since a $q$-cube has diameter $q$ (where distance is measured along $c_1$-adjacencies), we see that a fixed $q$-cube must contain a $q$-approximate fixed point.
\begin{theorem}\label{cthm1}
	Let $(X,c_1)\subset \Z^n$ be a finite digital image with $n \leq 4$ and $f:(X,c_1) \longrightarrow (X,c_1)$ be a self map with $\bar L(f)\neq 0$. Then there is some cube $\sigma \subseteq X$ with $f(\sigma) = \sigma$.
\end{theorem}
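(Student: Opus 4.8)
The plan is to transcribe the proof of the simplicial case (Theorem~\ref{sthm1}) into the cubical setting, pushing the alternating sum of traces from homology down to the cubical chain level and then reading a fixed cube off a nonvanishing diagonal entry. The one feature that distinguishes this argument from its simplicial counterpart is that the induced map $\bar f_q$ on cubical chains is only known to be a chain map when $n\le 4$, so the dimensional hypothesis is used precisely to license the use of the Hopf Trace Formula.

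First I would unwind the hypothesis: $\bar L(f)\neq 0$ says that $\sum_{q=0}^n(-1)^q\tr(\bar f_{*,q})\neq 0$. Since $X\subset\Z^n$ carries $c_1$-adjacency with $n\le 4$, Theorem~\ref{c1chainmap} guarantees that $\bar f_q:\bar C^{c_1}_q(X)\to\bar C^{c_1}_q(X)$ is a chain map, so Theorem~\ref{hopf_trace} applies and gives
\[\sum_{q=0}^n(-1)^q\tr(\bar f_{*,q})=\sum_{q=0}^n(-1)^q\tr(\bar f_q).\]
Hence the chain-level alternating sum is also nonzero, and therefore $\tr(\bar f_q)\neq 0$ for at least one dimension $q$.

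Next I would interpret a nonzero trace combinatorially. Expressing $\bar f_q$ as a matrix in the basis of elementary $q$-cubes of $X$, a nonzero trace exhibits a cube $\sigma$ with a nonzero diagonal entry. The structural fact I would rely on---the same one that makes the ``oriented count of fixed cubes'' reading in Example~\ref{exp_lbar} legitimate---is that $\bar f_q$ sends each basis cube either to $0$ (when $f$ collapses it to lower dimension) or to $\pm$ a single elementary cube, so that each column of the matrix has at most one nonzero entry. A nonzero diagonal entry at $\sigma$ can then only arise from $\bar f_q(\sigma)=\pm\sigma$, which is exactly the assertion that $f$ carries the cube $\sigma$ onto itself, i.e. $f(\sigma)=\sigma$ as a set, regardless of the orientation sign.

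I expect the main obstacle to be justifying that structural fact, namely that the nondegenerate image of an elementary $q$-cube under a $c_1$-continuous map is again a single elementary $q$-cube, so that $\bar f_q(\sigma)$ is genuinely $0$ or $\pm$ one basis element. For $q\le 2$ this is immediate, since a cycle of four distinct $c_1$-adjacent points in $\Z^n$ is forced to bound a unit square; in the higher-dimensional cases it is subsumed in the construction and analysis of the induced chain map in \cite{Staecker20digital}. Once this is in hand, the remainder is a direct transcription of the simplicial argument.
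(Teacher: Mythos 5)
Your proposal is correct and follows essentially the same route as the paper's own (very brief) proof: pass from $\bar L(f)\neq 0$ to a nonzero chain-level alternating sum of traces via the Hopf Trace Formula, then read a cube $\sigma$ with $\bar f_q(\sigma)=\pm\sigma$, hence $f(\sigma)=\sigma$, off a nonzero diagonal entry. You merely make explicit two points the paper leaves implicit---that Theorem~\ref{c1chainmap} (and hence the hypothesis $n\le 4$) is what licenses the use of Theorem~\ref{hopf_trace}, and that each column of $\bar f_q$ has at most one nonzero entry, equal to $\pm 1$, which is exactly the structural observation the paper records in Section~5.
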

\begin{proof}
	Since $\bar L(f)\neq 0$ it implies that $\sum_{q=0}^n (-1)^q \tr(\bar f_q) \neq 0$. This further implies that there exists some cube $\sigma \subseteq X$ such that $\bar f_q(\sigma) = \pm \sigma$, which means that $f(\sigma) = \sigma$.
\end{proof}
\begin{corollary}\label{cubicalfixedorapprox}
	Let $(X,c_1) \subset \Z^n$ be a finite digital image with $n \leq 4$ and $f : (X, c_1) \longrightarrow (X, c_1)$ be a self-map with $\bar L(f) \not= 0.$ 
	Then either $f$ has a fixed point or $f$ has at least $2$ $n$-approximate fixed points.
\end{corollary}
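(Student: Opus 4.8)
The plan is to follow the same dichotomy used in the proof of Corollary~\ref{fixedorapprox}, replacing the ``mutually adjacent'' structure of a simplex by the metric structure of an elementary cube. First I would invoke Theorem~\ref{cthm1}: since $\bar L(f)\neq 0$, there is an elementary cube $\sigma\subseteq X$ with $f(\sigma)=\sigma$. Writing $q=\dim\sigma$, the cube $\sigma$ has exactly $2^q$ vertices, and because $f$ carries the point set of $\sigma$ onto itself, $f$ restricts to a permutation of these vertices (a surjection of a finite set onto itself being a bijection).

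If $q=0$, then $\sigma$ is a single vertex fixed by $f$, so $f$ has a fixed point. Suppose instead $q\geq 1$. If the induced permutation of the vertices has any fixed vertex, then again $f$ has a fixed point. In the remaining case no vertex of $\sigma$ is fixed, and I claim every vertex is an $n$-approximate fixed point. This is where the key geometric input enters: for a vertex $x$ of $\sigma$ the image $f(x)$ is again a vertex of $\sigma$, and $x$ and $f(x)$ disagree only in some subset of the $q$ nondegenerate coordinates of $\sigma$. Flipping those coordinates one at a time produces a $c_1$-path from $x$ to $f(x)$ whose intermediate points are again corners of $\sigma$, hence lies entirely inside $\sigma\subseteq X$, and whose length is the number of differing coordinates, at most $q\leq n$. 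Thus there is a path of length at most $n$ from $x$ to $f(x)$, so $x$ is an $n$-approximate fixed point. Since $q\geq 1$ there are $2^q\geq 2$ such vertices, yielding at least two $n$-approximate fixed points.

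The point that must be gotten right---and the essential difference from the simplicial Corollary~\ref{fixedorapprox}---is the distance estimate. In the simplicial setting the vertices of a fixed simplex are pairwise $\kappa$-adjacent, so each is a genuine ($1$-)approximate fixed point; here the vertices of a fixed $q$-cube can be as far apart as its diameter $q$, so one only obtains $q$-approximate, and hence $n$-approximate, fixed points. I would accordingly be careful to confirm that the monotone coordinate-flipping path remains within $\sigma$, so that it is a legitimate path in $X$; this holds because every intermediate point encountered is itself a corner of $\sigma$. No deeper obstacle is expected, since the bound $2^q\geq 2$ for $q\geq 1$ and the reduction to the fixed-vertex case are both immediate.
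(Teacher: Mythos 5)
Your proposal is correct and follows essentially the same route as the paper: invoke Theorem~\ref{cthm1} to get a cube $\sigma$ with $f(\sigma)=\sigma$, dispose of the dimension-$0$ and fixed-vertex cases, and otherwise conclude that all $2^q\geq 2$ points of $\sigma$ are $n$-approximate fixed points. The only difference is that you make explicit the coordinate-flipping path of length at most $q\leq n$, which the paper leaves implicit in its remark that a $q$-cube has diameter $q$ in the $c_1$-metric.
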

\begin{proof}
	From Theorem \ref{cthm1}, if the fixed cube is of dimension $0$, then $f$ has a fixed point. While if the fixed cube is of higher dimension, then we have two cases; either some point is fixed by $f$, or all points of the cube are $n$-approximate fixed points, and there are at least $2$ of them.
\end{proof}
Now, as a consequence of Theorem \ref{cthm1} and \ref{bhithm}, we have:
\begin{corollary}\label{cthm}
	Let $(X, c_1) \subset \Z^n$ be a finite digital image with $n \leq 4$ and $f : (X, c_1) \longrightarrow (X, c_1)$ be a self-map with $\bar L(f) \not= 0.$ 
	Then any function homotopic to $f$ has a fixed point or at least two $n$-approximate fixed points.
\end{corollary}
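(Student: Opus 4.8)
The plan is to reduce the statement to the two results already established: the homotopy invariance of the cubical Lefschetz number (Theorem~\ref{bhithm}) and the fixed/approximate fixed point dichotomy for a map with nonzero cubical Lefschetz number (Corollary~\ref{cubicalfixedorapprox}). Concretely, let $g : (X, c_1) \longrightarrow (X, c_1)$ be any map with $g \simeq f$. First I would invoke Theorem~\ref{bhithm} to conclude $\bar L(g) = \bar L(f)$, so that $\bar L(g) \neq 0$ by hypothesis. Then I would apply Corollary~\ref{cubicalfixedorapprox} directly to $g$: since $\bar L(g) \neq 0$, the map $g$ either has a fixed point or has at least two $n$-approximate fixed points. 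As $g$ was an arbitrary map homotopic to $f$, this is exactly the desired conclusion, and the argument is a two-line composition of the cited results.

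The only genuinely non-routine point is a bookkeeping matter about the admissible dimensions, and I expect this to be the main obstacle. Corollary~\ref{cubicalfixedorapprox}, together with the underlying Theorem~\ref{cthm1}, is available for $n \leq 4$, since it only requires $\bar f_q$ to be a chain map, which Theorem~\ref{c1chainmap} guarantees up to $n = 4$. By contrast, the homotopy invariance step, Theorem~\ref{bhithm}, is stated only for $n \leq 3$, because the equality $\bar f_* = \bar g_*$ for homotopic maps (Theorem~\ref{homoinvac1}) was proved only in that range. Thus the two ingredients do not share the same range of validity, and the conclusion that the argument as structured actually delivers is the statement for $n \leq 3$, rather than $n \leq 4$ as written.

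I would resolve this mismatch in one of two ways. Either I would restate the corollary with the hypothesis $n \leq 3$, matching the range in which homotopy invariance is genuinely available, after which the composition above goes through verbatim; or, if one insists on keeping $n \leq 4$, one must first extend Theorem~\ref{homoinvac1} to $n = 4$, presumably by the same computer enumeration technique used for the chain map property in Theorem~\ref{c1chainmap}. Absent such an extension, the safe and provably correct version is the one for $n \leq 3$. I would therefore present the clean composition as the core of the proof while flagging explicitly that the homotopy invariance input confines the honest statement to $n \leq 3$.
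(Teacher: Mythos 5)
Your proof is exactly the paper's: the corollary is stated there as an immediate consequence of Theorem~\ref{bhithm} and Corollary~\ref{cubicalfixedorapprox} (via Theorem~\ref{cthm1}), with no further argument, and your two-line composition is precisely that argument. Your dimension caveat is moreover a genuine catch rather than a defect of your proof: the paper asserts the corollary for $n \leq 4$ even though its homotopy-invariance input (Theorems~\ref{homoinvac1} and~\ref{bhithm}) is only available for $n \leq 3$, so the honestly supported statement is the $n \leq 3$ version you propose, unless the homotopy invariance is first extended to $n = 4$.
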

\begin{example}
	Let $I^n \subset \Z^n$ be a digital image with the $c_1$-adjacency, where $I=\{0,1\}$ and $f$ be the antipodal map on $I^n$. That is, $f((t_1, \ldots, t_n)) = (1-t_1, \ldots, 1-t_n)$ for every $(t_1, \ldots, t_n) \in I^n$. Then every point of $X$ is an $n$-approximate fixed point. In fact, $f$ can never have $k$-approximate fixed point for any $k < n.$ This demonstrates that it is not always possible to improve the result in Corollary \ref{cthm} to achieve "closer" approximate fixed points.
\end{example}
\begin{theorem}\label{chafpthmc}
	Let $X \subset \Z^n$, let $f$ be homotopic to identity and the cubical Euler characteristic $\bar \chi(X)$ be nonzero. 
	Then $f$ has a fixed point or at least two $n$-approximate fixed points.
\end{theorem}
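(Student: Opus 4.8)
The plan is to mirror exactly the argument used for the simplicial analogue, Theorem~\ref{shafpthmid}: chain together the homotopy invariance of $\bar L$, the identification of $\bar L(id)$ with the cubical Euler characteristic, and the approximate-fixed-point Corollary~\ref{cubicalfixedorapprox}. Since $f$ is homotopic to the identity, the first step is to invoke the homotopy invariance Theorem~\ref{bhithm} to conclude that $\bar L(f) = \bar L(id)$. Next, by Example~\ref{ex_id_ceuler} we have $\bar L(id) = \bar\chi(X)$, and combining these two equalities gives $\bar L(f) = \bar\chi(X)$.

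With the hypothesis $\bar\chi(X) \neq 0$ in hand, this chain immediately yields $\bar L(f) \neq 0$. I would then apply Corollary~\ref{cubicalfixedorapprox} directly: since $\bar L(f) \neq 0$, the map $f$ either has a fixed point or has at least two $n$-approximate fixed points, which is precisely the desired conclusion. The entire argument is thus a three-step substitution, structurally identical to the strong-homotopy simplicial case, with no deep computation required.

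The one point requiring care—and the only real obstacle—is the compatibility of the dimension hypotheses of the results being cited. The homotopy invariance Theorem~\ref{bhithm} is stated only for $n \leq 3$, whereas the fixed-point Corollary~\ref{cubicalfixedorapprox} (and the underlying chain-map property of Theorem~\ref{c1chainmap} that makes $\bar L(f)$ computable from the chain groups) requires $n \leq 4$. The binding constraint is therefore $n \leq 3$, and I expect the theorem to be understood with that restriction in force, together with $X$ carrying the $c_1$-adjacency, as the cubical homology theory demands. Once it is verified that both cited results apply under this common bound, the proof goes through verbatim; I do not anticipate any substantive difficulty beyond keeping the dimension bookkeeping consistent.
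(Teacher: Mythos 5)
Your proposal is correct and matches the paper's own proof exactly: homotopy invariance (Theorem~\ref{bhithm}) gives $\bar L(f) = \bar L(id) = \bar\chi(X) \neq 0$, and Corollary~\ref{cubicalfixedorapprox} finishes. Your observation about the implicit dimension restriction ($n \leq 3$ from Theorem~\ref{bhithm} being the binding constraint, with $c_1$-adjacency assumed) is a fair point that the paper's statement leaves unstated, but it does not change the argument.
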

\begin{proof}
	Since $f$ is homotopic to identity, we have $\bar L(f) = \bar L(id) = \bar \chi(X) \neq 0$. The conclusion follows from Corollary \ref{cubicalfixedorapprox}.
\end{proof}
\begin{theorem}\label{contrb}
	If $X \subset \Z^n$ is contractible digital image, 
	then any function $f$ on $X$ has a fixed point or at least two $n$-approximate fixed points.
\end{theorem}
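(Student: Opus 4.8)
The plan is to mirror, in the cubical setting, the argument already used for the strongly contractible case in Theorem \ref{strngcontrb}, replacing strong homotopy by ordinary homotopy and the simplicial Lefschetz number by $\bar L$. The whole strategy is to show that an arbitrary self-map of a contractible image has cubical Lefschetz number equal to $1$, and then to invoke the cubical fixed point result of Corollary \ref{cubicalfixedorapprox}.

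First I would unwind the hypothesis: $X$ contractible means the identity $id_X$ is (ordinarily) homotopic to some constant map $c$. The key step is then to upgrade this to the statement that \emph{every} continuous self-map $f$ on $X$ is homotopic to a constant. This follows by post-composing the contracting homotopy with $f$: writing $f = f \circ id_X \simeq f \circ c$, and noting that $f \circ c$ is again a constant map. Here I would lean on the standard fact in digital topology that composing a digital homotopy with a continuous map on either side again yields a digital homotopy (Boxer's composition property), so that $H' = f \circ H$ is a homotopy from $f$ to the constant $f\circ c$.

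Next, with $f \simeq \text{(constant)}$ in hand, I would apply the homotopy invariance of the cubical Lefschetz number, Theorem \ref{bhithm}, to get $\bar L(f) = \bar L(f\circ c)$, and then Example \ref{ex7} to evaluate $\bar L(f \circ c) = 1$. Hence $\bar L(f) = 1 \neq 0$, and Corollary \ref{cubicalfixedorapprox} immediately gives that $f$ has a fixed point or at least two $n$-approximate fixed points, which is the desired conclusion. (Alternatively, one can route through Theorem \ref{chafpthmc}: contractibility plus Example \ref{ex_id_ceuler} forces $\bar\chi(X) = \bar L(id) = \bar L(c) = 1$, so $\bar\chi(X)\neq 0$; but since an arbitrary $f$ need not be homotopic to $id_X$, the direct route above is cleaner.)

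The main obstacle I anticipate is not the logical skeleton, which is routine, but the dimension bookkeeping. The homotopy invariance statement Theorem \ref{bhithm} is only established for $n \leq 3$, whereas the fixed point conclusion of Corollary \ref{cubicalfixedorapprox} holds for $n \leq 4$; so the argument as written genuinely requires the contracting homotopy and the invariance step to live in dimension $n \leq 3$, and the theorem should be read (or restated) with that restriction in force. The second point requiring care is the justification that composition preserves digital homotopy, since digital homotopies are defined via the product adjacency $NP_1$ (Theorem \ref{hthm}) rather than a genuine product topology; I would cite the relevant composition lemma rather than reprove it, and otherwise the proof reduces to the three invocations above.
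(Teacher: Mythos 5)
Your proposal is correct and follows essentially the same route as the paper's proof: the paper likewise asserts that contractibility makes $f$ homotopic to a constant, applies Theorem \ref{bhithm} together with Example \ref{ex7} to conclude $\bar L(f)=\bar L(c)=1\neq 0$, and finishes via Corollary \ref{cthm} (the homotopy-stable form of Corollary \ref{cubicalfixedorapprox}). Your two refinements --- spelling out $f = f\circ id_X \simeq f\circ c$ via the composition property, and noting that Theorem \ref{bhithm} is only established for $n\le 3$ while the fixed point corollary allows $n\le 4$ --- are both sound, and the latter correctly identifies a dimension restriction that the paper's statement leaves implicit.
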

\begin{proof}
	Let $f$ be a map on $X$. Since $X$ is contractible, then $f$ is homotopic to a constant map $c$. By Theorem \ref{bhithm} and Example \ref{ex7}, we have $\bar L(f) = \bar L(c) = 1$ which implies that $\bar L(f) \not= 0.$ Hence, by Corollary \ref{cthm} we conclude that $f$ has a fixed point or at least two $n$-approximate fixed points.
\end{proof}

Now we will discuss the values of the Lefschetz number when $X$ is a digital cycle of points. There is a simple classification of all continuous self-maps on digital cycles up to homotopy:

\begin{theorem}\cite{BoxerSta19fixed}\label{mapschar}
	Let $C_n = \{x_0,\dots, x_{n-1}\}$ be the digital cycle of $n$ points. Define the flip map $t$ on $C_n$ as $t(x_i)=x_{-i}$ for all $x_i \in C_n$, where subscripts are read modulo $n$. Then any self-map on $C_n$ is homotopic to one of: a constant map, the identity map, or the flip map $t$.
\end{theorem}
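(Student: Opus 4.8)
\emph{Plan.} I would read off from the combinatorial structure of a continuous self-map which of the three standard forms it takes, and then exhibit explicit (weak, $NP_1$) homotopies collapsing each form onto one of the listed representatives. Throughout I identify the vertex $x_i$ with its index $i \in \mathbb{Z}/n$, I adopt the standing convention $n \geq 4$ (so that $C_n$ is a genuine simple closed curve: each vertex has two distinct neighbours, and deleting any one vertex leaves a digital interval, which is contractible), and I recall that a map factoring through a contractible digital image is homotopic to a constant.

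The first observation is a dichotomy: $f$ is either surjective or not. If $f$ is \emph{not} surjective, then its image avoids some vertex $x_j$, so $f$ factors through the digital interval $C_n \setminus \{x_j\}$; since that subspace is contractible, $f \simeq c$ for a constant $c$. If $f$ \emph{is} surjective, then the $n$ values $f(x_0), \dots, f(x_{n-1})$ cover the $n$-point set $C_n$ and hence are all distinct, so $f$ is a bijection. Continuity forces each consecutive pair $f(x_i), f(x_{i+1})$ to be adjacent (equality is excluded by injectivity), so writing $f(x_{i+1}) = f(x_i) + s_i$ in $\mathbb{Z}/n$ gives a well-defined step $s_i \in \{-1, +1\}$. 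A short argument shows the signs cannot mix: if a $+1$ step is immediately followed by a $-1$ step, say $f(x_k) = f(x_{k-1}) + 1$ and $f(x_{k+1}) = f(x_k) - 1$, then $f(x_{k+1}) = f(x_{k-1})$, contradicting injectivity (and symmetrically for $-1$ then $+1$). Hence either all $s_i = +1$, giving a rotation $f(x_i) = x_{i+c}$, or all $s_i = -1$, giving a reflection $f(x_i) = x_{c-i}$, where $c$ is the index of $f(x_0)$.

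It remains to homotope these two surjective forms to the identity and the flip. For a rotation $r_c \colon x_i \mapsto x_{i+c}$ I would define $H \colon C_n \times \lbrack 0,1\rbrack_{\mathbb{Z}} \to C_n$ by $H(x_i, 0) = x_{i+c}$ and $H(x_i, 1) = x_{i+c-1}$. Each point moves exactly one step, so every induced time-path $H_{x_i}$ is a genuine path; the two time-slices are the rotations $r_c$ and $r_{c-1}$, both continuous; and the endpoint conditions hold. Thus $H$ satisfies the requirements of Definition \ref{defdhomo} (equivalently, it is $(NP_1(c_1,c_1),c_1)$-continuous by Theorem \ref{hthm}), giving $r_c \simeq r_{c-1}$; iterating yields $r_c \simeq r_0 = \mathrm{id}$. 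The identical one-step construction, using the reflections $\ell_c \colon x_i \mapsto x_{c-i}$ in place of the rotations, gives $\ell_c \simeq \ell_{c-1} \simeq \cdots \simeq \ell_0 = t$. Combining the three outcomes finishes the proof.

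I expect the main obstacle to be the surjective case, specifically the point that a surjective continuous self-map must traverse the cycle monotonically (all steps of a single sign): this is where bijectivity forced by pigeonhole and the ``no sign change'' injectivity argument must be combined cleanly, and it is what rules out any ``higher winding'' behaviour. By contrast the remaining ingredients are routine: the well-definedness of the steps $s_i$ (which needs only $n \geq 4$), the contractibility of a broken cycle, and the verification that the displayed $H$ meets \emph{only} the time-path and space-slice conditions of a digital homotopy rather than any diagonal compatibility — which is exactly why the rotation is homotopic, though not \emph{strongly} homotopic, to the identity.
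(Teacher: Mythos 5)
The paper itself offers no proof of this statement: it is quoted verbatim from \cite{BoxerSta19fixed}, so there is no in-paper argument to compare against. Judged on its own, your proof is correct and is essentially the standard argument for this classification: the surjective/non-surjective dichotomy, factoring a non-surjection through the contractible broken cycle, the pigeonhole-plus-no-sign-change step forcing a surjection to be a rotation $r_c$ or reflection $\ell_c$, and the one-step homotopies $r_c\simeq r_{c-1}$ and $\ell_c\simeq\ell_{c-1}$ are all sound (and your closing observation that these homotopies are weak but not strong is accurate and relevant, since it is exactly why $L$ and $\bar L$ behave differently on cycles in Proposition \ref{prop}).

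The only thing you owe the reader is the case $n\le 3$, which your standing convention $n\ge 4$ silently discards even though the theorem as stated has no such restriction: for $n\le 3$ the cycle $C_n$ is contractible (indeed every self-map of $C_3$ is continuous since all points are mutually adjacent), so every self-map is homotopic to a constant and the conclusion holds trivially — one sentence fixes this, and it is consistent with how the paper itself treats $n<4$ in Case 2 of the proof of Proposition \ref{prop}. Two smaller points worth making explicit: the no-sign-change argument needs $x_{k-1}\neq x_{k+1}$, i.e.\ $n\ge 3$, which your convention supplies; and in the non-surjective case you should note (as you implicitly do) that a null-homotopy of $f'$ inside the contractible subspace $C_n\setminus\{x_j\}$ composes with the inclusion to give a homotopy in $C_n$, since Definition \ref{defdhomo} is preserved by post-composition with a continuous map. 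With the trivial small-$n$ sentence added, this is a complete and correct proof.
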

\begin{proposition}\label{prop}
	Let $C_n$ be a digital cycle of $n$ points. If $f$ is any continuous self-map on $C_n$, then $L(f) = \bar L(f) \in \{0,1,2\}$, when $n \neq 4$, and $L(f) \in \{0,1,2\}$ and $\bar L(f) =1$ for $n = 4$. 
\end{proposition}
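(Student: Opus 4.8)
The plan is to reduce each Lefschetz number to the quantity $1-\deg(f)$, where $\deg(f)$ is the integer by which $f$ acts on first homology, and then to bound $\deg(f)$ directly on the chain level. This avoids any appeal to homotopy invariance for $L$, which we are not entitled to use since $L$ is only a \emph{strong} homotopy invariant (Theorem \ref{shithm}).

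I would begin by recording the homology of $C_n$ in both theories. For $n\ge 4$ the cycle graph contains no three mutually adjacent points, hence no $2$-simplex, so $C_n$ is simplicially $1$-dimensional with $C_2=0$; thus $H_0(C_n)=\Z$, and $B_1=0$ forces $H_1(C_n)=Z_1=\Z$, generated by the fundamental cycle $z=\sum_{i=0}^{n-1}\langle x_i,x_{i+1}\rangle$ (subscripts mod $n$). The same holds cubically precisely when $C_n$ spans no elementary $2$-cube, which is the case for $n\neq 4$: then the $1$-cubes are exactly the edges $\langle x_i,x_{i+1}\rangle$, and $\bar H_1(C_n)=\bar Z_1=\Z\langle z\rangle$. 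The case $n=4$ is genuinely different: a four-point $c_1$-cycle must be a unit square, and its four corners span the elementary $2$-cube $[0,1]^2$, so $C_4$ is cubically a filled square---contractible, with $\bar H_0=\Z$ and $\bar H_q=0$ for $q\ge1$ (one may instead invoke Theorem \ref{contrb} and Example \ref{ex7}).

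Since $C_n$ is connected, any self-map induces the identity on $H_0$ (and on $\bar H_0$), contributing trace $1$. On $H_1$ we have $f_{*,1}\colon\Z\to\Z$ equal to multiplication by $d=\deg(f)$, and because $B_1=0$ the class map $f_{*,1}$ is literally the restriction of the chain map $f_\#$ to $Z_1=\Z\langle z\rangle$; hence from the definition $L(f)=\tr(f_{*,0})-\tr(f_{*,1})=1-d$. The key step is to show $|d|\le1$. Working on chains, $f_\#(z)=\sum_{i=0}^{n-1}\langle f(x_i),f(x_{i+1})\rangle$ is a sum of $n$ terms, each of which is either $0$ or $\pm$ one of the edges $\langle x_j,x_{j+1}\rangle$; on the other hand $f_\#(z)=d\,z=d\sum_j\langle x_j,x_{j+1}\rangle$. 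Comparing the total edge multiplicities of these two expressions gives $n\,|d|\le n$, so $d\in\{-1,0,1\}$ and $L(f)=1-d\in\{0,1,2\}$ for every $f$. (As a check, the constant, identity, and flip maps of Theorem \ref{mapschar} realize $d=0,1,-1$, i.e. $L=1,0,2$.) For $n\neq4$ the $1$-simplices and $1$-cubes of $C_n$ are the same edges and $f$ acts on them by the same formula, so the simplicial and cubical degrees agree; therefore $L(f)=\bar L(f)\in\{0,1,2\}$. For $n=4$ the simplicial computation is unchanged, giving $L(f)\in\{0,1,2\}$, while cubical contractibility forces $\bar L(f)=\tr(\bar f_{*,0})=1$ (consistently with Theorem \ref{bhithm} applied to the classification of Theorem \ref{mapschar}).

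The principal obstacle is conceptual rather than computational: because $L$ fails to be an ordinary homotopy invariant, I cannot feed the homotopy classification of Theorem \ref{mapschar} into $L$ the way Theorem \ref{bhithm} lets me do for $\bar L$. The chain-level degree bound is precisely what circumvents this, evaluating $L(f)$ for the given map with no recourse to homotopy. A secondary point to handle carefully is the realization of $C_n$: one must argue that for $n\neq4$ the cycle may be embedded (in $\Z^2$ or $\Z^3$, within the dimension range where $\bar f_\#$ is a chain map, Theorem \ref{c1chainmap}) so as to span no $2$-cube, whereas for $n=4$ the square filling is forced---this is exactly the source of the exceptional behaviour at $n=4$.
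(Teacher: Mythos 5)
Your proposal is correct in substance, but it takes a genuinely different route from the paper. The paper's proof splits into the cases $n>4$, $n<4$, $n=4$: for $n>4$ it observes that, in the absence of $2$-simplices and $2$-cubes, the chain-level traces in dimensions $0$ and $1$ coincide, so $L(f)=\bar L(f)$; it then feeds the homotopy classification of Theorem \ref{mapschar} into the homotopy invariance of $\bar L$ (Theorem \ref{bhithm}) to get $\bar L(f)\in\{0,1,2\}$ (identity gives $0$, constant gives $1$, flip gives $2$), and transfers the conclusion back to $L$ through the equality $L=\bar L$ --- exactly the maneuver you judged yourself ``not entitled to'' and therefore avoided; note it is legitimate, since the invariance is only ever invoked for $\bar L$. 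Your chain-level degree bound ($f_\#(z)=dz$ with $n\lvert d\rvert\le n$, hence $L(f)=1-d\in\{0,1,2\}$) is a different and, in one respect, stronger argument: it evaluates $L(f)$ for \emph{every} continuous self-map uniformly, with no appeal to any invariance or classification. This actually repairs a looseness in the paper's $n=4$ case, where $L$ is computed only for the constant, rotations, identity, flip, and flip-composed-with-rotation; since Theorem \ref{mapschar} classifies maps only up to ordinary homotopy, under which $L$ is not invariant, that list does not exhaust the self-maps of $C_4$ (folds onto an edge, for instance), whereas your $1-d$ computation covers them all. What the paper's route buys in exchange is brevity and the explicit attachment of the values $1,0,2$ to the three homotopy classes without any chain computation.

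Two loose ends in your write-up. First, your derivation of $L(f)=1-d$ uses $B_1=0$, which requires the absence of $2$-simplices and hence holds only for $n\ge 4$; for $n=3$ the triangle is itself a $2$-simplex, $H_1(C_3)=0$, and $L(f)=1$ for every $f$ (the paper disposes of $n<4$ via strong contractibility, Theorem \ref{shithm} and Example \ref{ex6}). This case is trivial to patch but is currently absent from your argument. Second, your ``secondary point'' about realization is resolved incorrectly for odd $n$: $\Z^m$ with $c_1$-adjacency is bipartite, so an odd cycle admits no $c_1$-embedding at all, and strictly speaking $\bar L$ is then undefined (also, $n=6$ requires $\Z^3$, since any planar $6$-cycle acquires a chord). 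This defect is inherited from the proposition itself --- the paper's own proof computes $\bar L(t)=2$ for odd $n$ without comment --- so it does not count against you relative to the paper, but your claim that the embedding ``may be'' arranged for all $n\neq 4$ should be restricted to even $n$. (A final trifle: for the $n=4$ cubical computation, the value $\bar L(f)=1$ follows from Theorem \ref{bhithm} together with Example \ref{ex7}, not from Theorem \ref{contrb}, which concerns fixed points rather than the value of $\bar L$.)
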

\begin{proof}
	We are going to prove this in three cases as follows:
	\begin{itemize}
		\item Case 1: when $n > 4$;
		\item Case 2: when $n < 4$;
		\item Case 3: when $n = 4.$
	\end{itemize}
	\textbf{Case 1:} For $n>4$ there are no $2$ or higher dimensional simplices or cubes in $C_n.$ This implies that the computation for $L$ and $\bar L$ rely only on dimensions $0$ and $1$ simplices and cubes respectively. Therefore, since in dimensions $0$ and $1$ the traces are equal, we have $L(f) = \bar L(f)$. More precisely, in dimension $0$ both traces count the number of fixed points, and similarly in dimension $1$ they both count the number of fixed edges (with orientation). 
	
	Now, it remains only to show that the value of $L(f) = \bar L(f)$ is in fact in the set $\{0,1,2\}.$ So it is enough show that for any self map $f$ on $C_n$, $\bar L(f) \in \{0,1,2\}.$
	From Theorem \ref{mapschar}, we know that $f$ is homotopic to either the identity, a constant, or the flip map. We will consider them one after the other. If $f$ is homotopic to the identity map then by Theorem \ref{bhithm} and Example \ref{ex_id_ceuler}, $\bar L(f) = \bar \chi_{c_1}(C_n) = 0.$  If $f$ is homotopic to the constant map then by Theorem \ref{bhithm} and Example \ref{ex7}, $\bar L(f) = 1.$ Lastly, if $f$ is homotopic to the flip map $t$ then by Theorem \ref{bhithm}, we have $\bar L(f) = \bar L(t)$. Now, we conclude that if $n$ is even, we have $\bar L(f) = 2$ since $t$ has $2$ fixed points ($0$-simplices) and no fixed edges ($1$-simplices). If $n$ is odd, we have $\bar L(f) = 2,$ since $t$ has one fixed point ($0$-simplex) and one fixed edge ($1$-simplex) in the reverse orientation. Therefore, $L(f) = \bar L(f) \in \{0,1,2\}$ for any map $f$ on $C_n$ with $n>4.$
	
	\textbf{Case 2:} When $n<4$, this is easy as $C_n$ is both contractible and strong contractible, hence by Theorem \ref{shithm},\ref{bhithm} and Example \ref{ex6}, \ref{ex7}, $L(f) = \bar L(f) = 1$.
	
	\textbf{Case 3:} For $n=4$, we have a special case, where $C_4$ is contractible but not strong contractible.
	Now, we will compute both $L(f)$ and  $\bar L(f)$ case by case, with the help of the characterization of maps on $C_4$ by Theorem \ref{mapschar}. For $L(f)$; if $f$ is a constant map then $L(f)=1$ by Theorem \ref{shithm} and Example \ref{ex6}. If $f$ is a rotation map, then it has no fixed points or edges, so $L(f)=0.$ If $f$ is the identity map, then computing the traces directly gives $L(f)=0,$ which is also in line with Example \ref{ex_id_euler}. If $f$ is the flip map $t$, we then obtain $L(f) = 2$. This is because, there are only $2$ fixed points ($0$-simplices) and no fixed edges ($1$-simplices). Similarly, if $f$ is composition of the flip map $t$ and a rotation map, we have $L(f) = 2$. Therefore, $L(f) \in \{0,1,2\}$ for any map $f$ on $C_4.$ Now, for $\bar L(f)$; we can easily see that $\bar L(f) = \bar L(c)=1$ from Theorem \ref{bhithm} and Example \ref{ex7}. 
\end{proof}
Note that, the argument in the proof above is in agreement with the classical method of calculating $L(f)$ on a circle, which has the formula as: $L(f) = 1-d,$ where $d$ is the degree of the mapping. Since in a digital cycle the maps; identity, constant and flip have $1,0$ and $-1$ degree respectively.

\section{Properties of Digital Lefschetz Numbers}

\subsection{Digital Lefschetz Numbers and Number of Approximate Fixed Points}
In classical topology, the classical Lefschetz fixed point theorem says: ``If $L(f) \neq 0$ then the map $f$ has a fixed point.'' However, the specific values of $L(f)$ do not give us much more information apart from that. For instance, if $L(f) = 2$, then this does not mean that there will be $2$ fixed points: there could be a single fixed point with fixed point index 2.

The situation is different, however, in the digital setting. When $f:X\to Y$ is continuous, the entries of the matrices $f_q$ and $\bar f_q$ are always $0, 1,$ or $-1.$ For example if $s$ is a $q$-simplex, then the column of $f_q$ corresponding to $s$ will have an entry of $\pm 1$ in the position indicating the $q$-simplex $f(s)$, or will be 0 if $f(s)$ is of dimension smaller than $q$.

In the classical setting, the matrix can be more interesting because an edge can stretch and wrap across several others or even itself multiple times, which could give matrix entries other than 0 or $\pm 1$. However, this kind of scenario is not possible in digital topological setting since a digitally continuous map cannot stretch simplices or cubes.

Therefore, for instance if we have $|L(f)| = k$ with $k \neq 0$, it will imply that there are at least $k$ fixed simplices which make nonzero contribution to the computation of $L(f)$ and hence add up to $k$. Similarly to $\bar L(f).$ Thus, we obtain:
\begin{theorem}
	If $|L(f)| = k$, then there are at least $k$ fixed simplices, and hence at least $k$ distinct approximate fixed points.
\end{theorem}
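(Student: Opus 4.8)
The plan is to read $L(f)$ as a \emph{signed count of fixed simplices} and then convert fixed simplices into approximate fixed points. First I would record the algebraic input already isolated in the paragraph preceding the statement: because $f$ is digitally continuous, every column of the matrix of $f_q$ has at most one nonzero entry, equal to $+1$ or $-1$, in the row of the simplex $f(\sigma)$ (and the column is zero when $f(\sigma)$ has dimension smaller than $q$). Consequently a diagonal entry $(f_q)_{\sigma\sigma}$ is nonzero exactly when $f(\sigma)=\sigma$ as a set, in which case it equals $\mathrm{sgn}(\pi_\sigma)=\pm1$, where $\pi_\sigma$ is the permutation that $f$ induces on the vertices of $\sigma$. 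Hence
\[
L(f)=\sum_{q=0}^n(-1)^q\tr(f_q)=\sum_{\sigma:\,f(\sigma)=\sigma}(-1)^{\dim\sigma}\,\mathrm{sgn}(\pi_\sigma),
\]
an integer sum in which each fixed simplex contributes exactly $\pm1$. Since a sum of $\pm1$'s of absolute value $k$ must have at least $k$ summands, there are at least $k=|L(f)|$ fixed simplices. This settles the first assertion cleanly.

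Next I would prove the elementary lemma that turns a fixed simplex into approximate fixed points: if $f(\sigma)=\sigma$ then every vertex of $\sigma$ is an approximate fixed point. Indeed $f_{\dim\sigma}(\sigma)=\pm\sigma\neq0$ forces $f$ to send the vertices of $\sigma$ to that many \emph{distinct} vertices, so $f$ restricts to the permutation $\pi_\sigma$ of the vertex set $V(\sigma)$; for each vertex $x\in V(\sigma)$ the image $f(x)$ again lies in $V(\sigma)$, and since all vertices of a simplex are mutually adjacent we get $x\Leftrightarrow f(x)$, i.e.\ $x$ is an approximate fixed point. Thus every vertex appearing in any fixed simplex is an approximate fixed point.

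The final step -- producing $k$ \emph{distinct} approximate fixed points -- is where the real work lies, and I expect it to be the main obstacle. Choosing one vertex from each of the $k$ fixed simplices does not automatically give distinct points, because different fixed simplices can share vertices; in fact the raw number of fixed simplices can vastly exceed the number of vertices they cover (for the identity map every face of every simplex is fixed), so one cannot argue ``one simplex, one new point.'' The clean formulation of what must actually be shown is the covering inequality
\[
\Big|\bigcup_{\sigma:\,f(\sigma)=\sigma}V(\sigma)\Big|\;\ge\;|L(f)|,
\]
and the natural tool is a system of distinct representatives (Hall's marriage theorem) applied to the incidence between the fixed simplices and their vertices. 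I would therefore try to verify the Hall condition, the essential point being that the bound must be driven by the \emph{value} $|L(f)|$ -- in which the terms $(-1)^{\dim\sigma}\mathrm{sgn}(\pi_\sigma)$ may cancel heavily -- rather than by the much larger number of fixed simplices. This is the delicate step on which a complete proof stands or falls: the covering inequality is plausible in the low-dimensional images of the paper's examples, but it is not a formal consequence of ``$\ge k$ fixed simplices'' alone, and making it rigorous (or identifying the hypotheses under which it holds) is the crux I would need to address.
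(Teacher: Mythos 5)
Your first two paragraphs reconstruct exactly what the paper actually does: the theorem appears immediately after a discussion paragraph observing that the matrices $f_q$ have entries in $\{0,\pm 1\}$, so that each fixed simplex contributes exactly $\pm 1$ to $\sum_{q}(-1)^q\tr(f_q)=L(f)$ and $|L(f)|=k$ forces at least $k$ fixed simplices. The paper gives no proof environment at all; the passage from ``$k$ fixed simplices'' to ``$k$ distinct approximate fixed points'' is carried entirely by the word ``hence.'' Your lemma that every vertex of a fixed simplex is an approximate fixed point (since $f$ permutes the vertex set of a fixed simplex and those vertices are mutually adjacent) is correct and is the implicit content behind that word.

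The obstacle you isolate in your final paragraph is therefore not a defect of your write-up relative to the paper---it is a genuine gap in the theorem itself, and your suspicion that the covering inequality can fail is vindicated: it is false in general, so no Hall-type argument can close the gap unconditionally. Take $X=\{0,1\}^5\subset\Z^5$ with $c_1$-adjacency and $f=\mathrm{id}$. Two $c_1$-neighbors of a common point differ in two coordinates, so $X$ is triangle-free and its only simplices are its $32$ vertices and $80$ edges; hence $L(\mathrm{id})=\chi(X)=32-80=-48$ (using Example \ref{ex_id_euler}). Here $k=48$ and indeed all $112$ simplices are fixed, but $\#X=32$, so there cannot be $48$ distinct approximate fixed points; the same example refutes Corollary \ref{cor_supbd}, which the paper derives from this theorem. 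What survives is precisely what you proved: there are at least $k$ fixed simplices, and any single fixed $q$-simplex supplies $q+1$ distinct approximate fixed points, so the dichotomy of Corollary \ref{fixedorapprox} is unaffected. The second assertion of the theorem, however, needs either extra hypotheses on $(X,\kappa)$ ruling out such triangle-free images with $E>2V$, or a weakening of its conclusion; your refusal to assert the covering inequality $\bigl|\bigcup_{f(\sigma)=\sigma}V(\sigma)\bigr|\ge|L(f)|$ without further justification was the right call.
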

\begin{example}
	Let $X$ be the digital image in Figure $\ref{robot}$ and $id$ be the identity map on $X$, then $L(id)=\chi(X)=-2$. Now, from the above theorem and the strong homotopy invariance property of $L(f)$, any map strong homotopic to the identity must have at least $2$ distinct approximate fixed points.
	
	We also note that $\bar L(id) = \bar \chi(X) = 0$, and that the identity map is in fact homotopic to a map with no 2-approximate fixed points. We may change $id$ by homotopy by contracting the ``arms'' and ``legs'' so that all of its values occur in the long rectangle at right in Figure \ref{robot}. Then we may rotate these values so that no point is 2-approximate fixed.
\end{example}

Since any self-map has at least $|L(f)|$ distinct approximate fixed points, we obtain:
\begin{corollary}\label{cor_supbd}
	Let $L(f)$ be the simplicial Lefschetz number of a continuous self-map $f$ on a digital image $X.$ Then, $|L(f)| \leq \# X.$
\end{corollary}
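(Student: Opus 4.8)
Corollary \ref{cor_supbd} states that $|L(f)| \leq \#X$.

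The plan is to chain together two facts already established in the excerpt. The immediately preceding theorem tells us that $|L(f)| = k$ forces the existence of at least $k$ distinct approximate fixed points. Since every approximate fixed point is in particular a point of $X$, the number of distinct approximate fixed points cannot exceed $\#X$. Composing these observations gives $|L(f)| \leq \#X$ directly.

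First I would invoke the theorem that if $|L(f)| = k$ then $f$ has at least $k$ distinct approximate fixed points. Writing $k = |L(f)|$, this produces a set $S \subseteq X$ of approximate fixed points with $\#S \geq |L(f)|$. Second, I would observe the trivial containment $S \subseteq X$, which yields $\#S \leq \#X$ because $X$ is finite. Combining the two inequalities gives
\[
|L(f)| \leq \#S \leq \#X,
\]
which is the desired bound.

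There is essentially no obstacle here: the corollary is a one-line consequence of the counting theorem that precedes it, and the only ingredient beyond that theorem is the finiteness of $X$ (guaranteed since $X$ is a finite digital image) together with the set-theoretic fact that a subset of a finite set has cardinality at most that of the ambient set. One mild point worth stating explicitly is that the $k$ approximate fixed points produced by the theorem are \emph{distinct}, so they genuinely contribute $k$ separate elements to the count; the theorem already asserts distinctness, so no extra work is needed. I would keep the proof to a single sentence or two, simply citing the preceding theorem and noting that all approximate fixed points lie in $X$.
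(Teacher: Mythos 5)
Your proof is correct and matches the paper's own reasoning exactly: the paper derives the corollary in one line from the preceding theorem, noting that since any self-map has at least $|L(f)|$ distinct approximate fixed points, and these are points of the finite image $X$, the bound $|L(f)| \leq \#X$ follows. Your explicit note about the distinctness of the $k$ points is the right point to make explicit, and nothing further is needed.
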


By the same reasoning as above we obtain similar results for the cubical Lefschetz number.
\begin{theorem}\label{no_nafpt}
	If $|\bar L(f)| = k$, then there are at least $k$ fixed cubes and hence the existence of at least $k$ distinct $n$-approximate fixed points.
\end{theorem}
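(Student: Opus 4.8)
The plan is to mirror the proof of the simplicial theorem stated just above, transporting each step into the $c_1$-cubical setting; the only genuinely new ingredient is the passage from fixed cubes to $n$-approximate fixed points, which rests on the diameter estimate already recorded before Theorem \ref{cthm1}.

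First I would move the computation of $\bar L(f)$ off of homology and onto the chain level. Since $X\subset\Z^n$ with $n\le 4$, Theorem \ref{c1chainmap} guarantees that each $\bar f_q$ is a chain map, so Theorem \ref{hopf_trace} applies and yields
\[\bar L(f)=\sum_{q=0}^n(-1)^q\tr(\bar f_{*,q})=\sum_{q=0}^n(-1)^q\tr(\bar f_q).\]
The decisive structural fact, exactly as in the simplicial case, is that the matrix of $\bar f_q$ in the basis of elementary $q$-cubes has all entries in $\{0,1,-1\}$: a digitally continuous map cannot stretch a cube, so for each $q$-cube $\sigma$ the chain $\bar f_q(\sigma)$ is either a single $q$-cube with coefficient $\pm1$ or is $0$. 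Hence the diagonal entry indexed by $\sigma$ is nonzero precisely when $f(\sigma)=\sigma$, in which case it equals $+1$ or $-1$ according to whether $f$ preserves or reverses the orientation of $\sigma$.

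This makes the alternating sum transparent: every fixed cube $\sigma$ contributes the single value $(-1)^{\dim\sigma}\,\epsilon_\sigma$ with $\epsilon_\sigma=\pm1$, and no other term contributes. Therefore $|\bar L(f)|$ is bounded by the total number of fixed cubes, and $|\bar L(f)|=k$ forces at least $k$ of them, giving the first conclusion. To reach $n$-approximate fixed points I would invoke the diameter estimate preceding Theorem \ref{cthm1}: since $\bar f_q(\sigma)=\pm\sigma$ is nonzero, $f$ maps the $2^q$ vertices of $\sigma$ bijectively onto themselves, and a $q$-cube has $c_1$-diameter $q\le n$, so for each vertex $v$ the point $f(v)$ lies within distance $n$ of $v$; thus every vertex of a fixed cube is an $n$-approximate fixed point, and each fixed cube supplies at least one.

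The hard part is the word \emph{distinct}. Assigning one $n$-approximate fixed point per fixed cube over-counts, because several fixed cubes share vertices and because the alternating sum permits cancellation between cubes of adjacent dimension (already the identity on a filled square has nine fixed cubes but $\bar L=1$). The reliable route is therefore not ``$k$ fixed cubes $\Rightarrow$ $k$ distinct points'' but a direct bound $|\bar L(f)|\le\#\{n\text{-approximate fixed points}\}$. I would secure this by grouping the fixed cubes according to a canonically chosen corner — say $v(\sigma)$, the coordinatewise minimum of $\sigma$, which is an $n$-approximate fixed point — and writing $\bar L(f)=\sum_{a} i_a$, where $i_a=\sum_{v(\sigma)=a}(-1)^{\dim\sigma}\epsilon_\sigma$ is a local index attached to the corner $a$. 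The crux, and the step I expect to demand the most care, is to prove $|i_a|\le1$ for each $a$ by a discrete local-index computation exploiting the rigidity of elementary cubes sharing a fixed minimal corner; granting this, only the corners with $i_a\neq0$ contribute, each is a distinct $n$-approximate fixed point, and their number is at least $|\bar L(f)|=k$, which completes the argument.
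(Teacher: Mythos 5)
Your first half coincides with the paper's actual argument: the paper proves this theorem by declaring that it follows ``by the same reasoning'' as the simplicial version, namely passing to the chain level via Theorems \ref{c1chainmap} and \ref{hopf_trace}, observing that the matrix of $\bar f_q$ has entries in $\{0,\pm 1\}$ with a nonzero diagonal entry of $\pm 1$ exactly at each fixed $q$-cube, concluding that $|\bar L(f)|=k$ forces at least $k$ fixed cubes, and then using the diameter-$q$ observation preceding Theorem \ref{cthm1} to convert fixed cubes into $n$-approximate fixed points. Up to that point your write-up is correct and is essentially the paper's proof. Where you diverge is the word \emph{distinct}: the paper simply asserts the passage from $k$ fixed cubes to $k$ distinct $n$-approximate fixed points, while you rightly note that fixed cubes share vertices and that the alternating sum involves cancellation, and you propose a local-index repair.

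That repair, however, contains a genuine gap, and not merely an omitted verification: the pivotal claim $|i_a|\le 1$ is \emph{false} for the minimal-corner grouping you chose. Take $f=\mathrm{id}$ on $X=\{(0,0,0),(1,0,0),(0,1,0),(0,0,1)\}\subset\Z^3$ with $c_1$-adjacency: every cube is fixed with $\epsilon_\sigma=+1$, the three edges all have minimal corner at the origin, and so
\[i_{(0,0,0)}=(-1)^0+3\cdot(-1)^1=-2,\]
while the other three corners each carry index $+1$, correctly summing to $\bar L(\mathrm{id})=\bar\chi(X)=4-3=1$. The local index is unbounded in the positive direction as well: for the $2$-skeleton of a corner of $I^4$ in $\Z^4$ (the origin, the four points $e_i$, and the six points $e_i+e_j$, all cubes fixed by the identity) one gets $i_a=1-4+6=3$ at the origin. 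So the step you flagged as the crux and deferred with ``granting this'' cannot be granted as stated, and since your final inequality $|\bar L(f)|\le\#\{n\text{-approximate fixed points}\}$ rests entirely on it, the proposal is incomplete at exactly the point where it attempts to improve on the paper. (These examples do not threaten the theorem itself --- in both cases $\bar L=1$ --- they only kill this particular cancellation scheme.) To land where the paper lands, either find a grouping for which a bounded local index can actually be proved, or follow the paper: stop at ``at least $k$ fixed cubes, each consisting of $n$-approximate fixed points,'' which is all the justification the paper itself offers for the distinctness count.
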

\begin{corollary} \label{cor_cupbd}
	Let $\bar L(f)$ be the cubical Lefschetz number of a continuous self-map $f$ on a digital image $X.$ Then, $|\bar L(f)| \leq \# X.$
\end{corollary}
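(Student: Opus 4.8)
The plan is to obtain this as an immediate counting consequence of Theorem \ref{no_nafpt}, exactly paralleling the derivation of Corollary \ref{cor_supbd} in the simplicial case. Set $k = |\bar L(f)|$. Theorem \ref{no_nafpt} guarantees that $f$ possesses at least $k$ \emph{distinct} $n$-approximate fixed points. By definition each $n$-approximate fixed point is an element of $X$, so these $k$ points form a subset of $X$ of cardinality at least $k$.

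From here I would simply invoke the pigeonhole observation that a collection of at least $k$ distinct elements of the finite set $X$ cannot exceed the size of $X$; that is, $k \leq \#X$. Combining this with the previous step yields
\[
|\bar L(f)| = k \leq \#X,
\]
which is the asserted inequality. (Here we use that the digital images under consideration are finite, so that $\#X$ is well defined.)

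There is essentially no obstacle at this stage: the substantive work has already been carried out in Theorem \ref{no_nafpt}, whose conclusion that the approximate fixed points are genuinely \emph{distinct} is precisely what makes the counting argument go through. The only point deserving a moment's attention is that one must apply the ``distinct'' form of that theorem rather than merely counting fixed cubes, since distinct fixed cubes may share vertices; once distinctness is in hand the corollary follows by inspection. An alternative route would start from $\bar L(f) = \sum_{q} (-1)^q \tr(\bar f_q)$ and bound $|\bar L(f)|$ by the number of cubes fixed up to sign, then pass to the distinct vertices these cubes contain, but this only re-proves the content of Theorem \ref{no_nafpt} and is less economical.
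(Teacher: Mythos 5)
Your proposal is correct and matches the paper's own route: the corollary is obtained there exactly as you do, by reading off from Theorem \ref{no_nafpt} that $f$ has at least $|\bar L(f)|$ distinct $n$-approximate fixed points, each of which lies in the finite set $X$, so that $|\bar L(f)| \leq \#X$, mirroring the derivation of Corollary \ref{cor_supbd}. Your side remark that one must use distinctness of the approximate fixed points (not merely of the fixed cubes, which can share vertices) is a fair observation, but it is already absorbed into the statement of Theorem \ref{no_nafpt}, just as in the paper.
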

\begin{figure}
	\[
	\begin{tikzpicture}[scale=.35]
	
	\foreach \y in {1,2} {
		\foreach \x in {3,4,7,8} {
			\node[vertex] at (\x,\y) {};
		}
	}
	\foreach \y in {3,4} {
		\foreach \x in {1,4,7,10} {
			\node[vertex] at (\x,\y) {};
		}
	}
	\foreach \y in {5,7} {
		\foreach \x in {1, 3,4,...,8, 10} {
			\node[vertex] at (\x,\y) {};
		}
	}
	\foreach \x in {1,2,3,8,9,10} {
		\node[vertex] at (\x,6) {};
	}
	\foreach \x in {1,10} {
		\node[vertex] at (\x,8) {};
	}
	\draw(1,3) -- (1,8);
	\draw(10,3) -- (10,8);
	\draw(3,5) rectangle (8,7);
	\draw(1,6) -- (3,6);
	\draw(8,6) -- (10,6);
	\draw(4,1) -- (4,5);
	\draw(7,1) -- (7,5);
	\draw(3,1) rectangle (4,2);
	\draw(7,1) rectangle (8,2);
	\end{tikzpicture}
	\qquad
	\begin{tikzpicture}[scale=.35]
	\node at (1,1) {};
	\foreach \y in {1,2} {
		\foreach \x in {3,4,7,8} {
			\node[vertex] at (\x,\y) {};
		}
	}
	\foreach \y in {3,4} {
		\foreach \x in {4,7} {
			\node[vertex] at (\x,\y) {};
		}
	}
	\foreach \y in {5,7} {
		\foreach \x in {3,4,...,8} {
			\node[vertex] at (\x,\y) {};
		}
	}
	\foreach \x in {3,8} {
		\node[vertex] at (\x,6) {};
	}
	\draw(3,5) rectangle (8,7);
	\draw(4,1) -- (4,5);
	\draw(7,1) -- (7,5);
	\draw(3,1) rectangle (4,2);
	\draw(7,1) rectangle (8,2);
	\end{tikzpicture}
	\qquad
	\begin{tikzpicture}[scale=.35]
	\node at (1,1) {};
	\foreach \y in {5,7} {
		\foreach \x in {3,4,...,8} {
			\node[vertex] at (\x,\y) {};
		}
	}
	\foreach \x in {3,8} {
		\node[vertex] at (\x,6) {};
	}
	
	\draw(3,5) rectangle (8,7);
	\end{tikzpicture}
	\]
	\caption{A digital image $X$ (left), together with its minimal reduction (middle) with respect to strong homotopy equivalence, and its minimal reduction (right) with respect to homotopy equivalence.}
	\label{robot}
\end{figure}
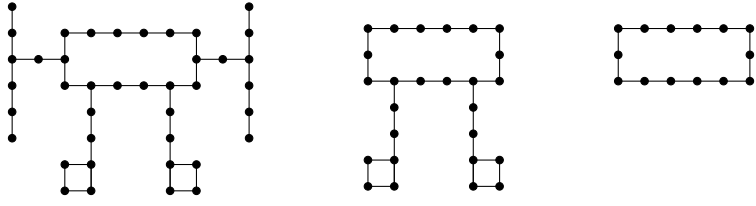

\subsection{Commutativity Property of the Lefschetz Numbers}
This part comprises the study of commutative property of both the simplicial Lefschetz number and the cubical Lefschetz number, and the study of reduction of an image $X$ with respect to (strong) homotopy equivalences. We also briefly discuss some possible applications of these properties to image thinning operations.
\begin{theorem}\label{commutative}
	Let $X$ and $Y$ be digital images, and $f : X \longrightarrow Y$ and $g : Y \longrightarrow X$ be continuous. Then $L(f \circ g) = L(g \circ f)$. If $X$ and $Y$ are subsets of $\Z^n$ with $c_1$ adjacency, then $\bar L(f\circ g) = \bar L(g\circ f)$.
\end{theorem}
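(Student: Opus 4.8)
The plan is to reduce the statement to two soft ingredients: the functoriality of the induced maps on homology, and the cyclic invariance of the trace. Recall that a continuous map induces homomorphisms on homology, and these compose functorially, so $(g \circ f)_{*,q} = g_{*,q} \circ f_{*,q}$ as an endomorphism of $H_q(X)$, while $(f \circ g)_{*,q} = f_{*,q} \circ g_{*,q}$ as an endomorphism of $H_q(Y)$, where $f_{*,q} : H_q(X) \to H_q(Y)$ and $g_{*,q} : H_q(Y) \to H_q(X)$. This functoriality descends from the chain level: writing $f_q : C_q(X) \to C_q(Y)$ and $g_q : C_q(Y) \to C_q(X)$ for the induced chain maps of Definition \ref{defsm}, one checks directly from the formula $\phi_q(\langle p_0,\dots,p_q\rangle) = \langle \phi(p_0),\dots,\phi(p_q)\rangle$ that $(g\circ f)_q = g_q \circ f_q$ and $(f\circ g)_q = f_q \circ g_q$, the two sides vanishing simultaneously whenever a simplex collapses under the relevant map. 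Since all of these are chain maps by Lemma \ref{lemchain}, they pass to homology and the claimed functoriality holds.

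The second ingredient is purely algebraic: if $\alpha : A \to B$ and $\beta : B \to A$ are homomorphisms of finitely generated abelian groups, then $\tr(\beta \circ \alpha) = \tr(\alpha \circ \beta)$. Passing to free parts (equivalently, tensoring with $\mathbb{Q}$) reduces this to the cyclic identity $\tr(BA) = \tr(AB)$ for matrices, which holds even though $A$ and $B$ need not be square. Taking $\alpha = f_{*,q}$ and $\beta = g_{*,q}$ yields $\tr((g\circ f)_{*,q}) = \tr((f\circ g)_{*,q})$ for every $q$. It then remains only to assemble these equalities into the alternating sums defining $L(g\circ f)$ and $L(f\circ g)$. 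Here one must be slightly careful, since the two self-maps live on different images and so the sums a priori run up to the respective maximal simplex dimensions $n_X$ and $n_Y$. This is harmless: I would extend both sums to run up to $N = \max(n_X, n_Y)$ and observe that whenever $q$ exceeds one of the two dimensions the corresponding chain group, hence homology group, is zero, so that term vanishes on both sides. Thus the two alternating sums agree term by term, giving $L(f\circ g) = L(g\circ f)$. (One could equally argue at the chain level using Theorem \ref{hopf_trace}, replacing $\tr((g\circ f)_{*,q})$ by $\tr(g_q\circ f_q)$ throughout, but this is not necessary.)

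For the cubical statement the argument is identical after replacing $C_q, H_q, f_{*,q}$ by their cubical counterparts $\bar C_q, \bar H_q, \bar f_{*,q}$, with the functoriality $(\overline{g\circ f})_q = \bar g_q \circ \bar f_q$ checked from the action of a map on the vertices of an elementary cube (again with simultaneous vanishing when a cube degenerates). The one genuine point to watch is that the induced cubical maps must be chain maps in order to descend to homology and to license the comparison; by Theorem \ref{c1chainmap} this is guaranteed exactly when the ambient dimension is at most $4$, which is precisely the range in which $\bar L$ is defined, so the hypothesis $X, Y \subset \Z^n$ with $c_1$-adjacency carries the implicit restriction $n \le 4$. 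I expect the only real subtlety to be bookkeeping rather than substance: applying the cyclic trace identity to maps between genuinely different groups (rather than endomorphisms of a single group), and reconciling the differing top dimensions of $X$ and $Y$ as above.
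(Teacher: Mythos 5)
Your proof is correct and takes essentially the same route as the paper's: functoriality of the induced maps on homology combined with cyclic invariance of the trace, assembled into the alternating sums. If anything, yours is more careful than the paper's own proof, which mislabels $f_{*,q}$ and $g_{*,q}$ as endomorphisms of $H_q(X)$ and invokes commutativity of trace ``of square matrices''; your appeal to the rectangular identity $\tr(\beta\circ\alpha)=\tr(\alpha\circ\beta)$ for maps between different groups, the chain-level check that composition respects the collapse-to-zero convention, and the reconciliation of the differing top dimensions $n_X$, $n_Y$ are precisely the details that argument elides.
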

\begin{proof}
	Since $f_{*,q}$ and $g_{*,q}$ are square matrices, and by the definition of $L(f)$ and the fact that the trace of a composition of square matrices is commutative (that is, $\tr(A \circ B) = \tr(B \circ A)$), we have
	\begin{align*}
	L(f \circ g) &= \sum_{q=0}^n (-1)^q \tr((f \circ g)_{*,q})\\
	&= \sum_{q=0}^n (-1)^q \tr(f_{*,q} \circ g_{*,q})\\
	&= \sum_{q=0}^n (-1)^q \tr(g_{*,q} \circ f_{*,q})\\
	&= \sum_{q=0}^n (-1)^q \tr((g \circ f)_{*,q}) = L(g \circ f),
	\end{align*}
	where $f_{*,q}, g_{*,q} : H_q(X) \longrightarrow H_q(X).$ Now, following the same argument shows that $\bar L(f\circ g) = \bar L(g\circ f)$.
\end{proof}
\begin{theorem}\label{homoequiv}
	Let $X$ and $Y$ be digital images both with $c_1$-adjacency. Let $f : X \longrightarrow X$ be a mapping and $g : X \longrightarrow Y$ be a digital homotopy equivalence with homotopy inverse $h : Y \longrightarrow X$. Then $\bar L(f) = \bar L(g \circ f \circ h).$
\end{theorem}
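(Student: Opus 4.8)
The plan is to derive the identity purely formally from two results already established for the cubical Lefschetz number: the commutativity property (Theorem \ref{commutative}) and the homotopy invariance (Theorem \ref{bhithm}). The guiding idea is that $g\circ f\circ h$ is a cyclic rearrangement of $f\circ h\circ g$, and in the latter the adjacent pair $h\circ g$ can be collapsed to the identity because $h$ is a homotopy inverse of $g$.

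First I would regroup the composite as $g\circ f\circ h = g\circ(f\circ h)$ and view it through the lens of Theorem \ref{commutative} with the two maps $g:X\to Y$ and $f\circ h:Y\to X$. Since $X$ and $Y$ are $c_1$-images in the relevant $\Z^n$, the cubical case of Theorem \ref{commutative} applies and gives
\[\bar L(g\circ f\circ h)=\bar L\big(g\circ(f\circ h)\big)=\bar L\big((f\circ h)\circ g\big)=\bar L(f\circ h\circ g).\]
Here one should simply check that both compositions are genuine self-maps (of $Y$ and of $X$ respectively) so that the Lefschetz numbers are defined.

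Next I would exploit that $h$ is a homotopy inverse of $g$, so by Definition \ref{equitype} there is a digital homotopy $H:X\times[0,m]_\Z\to X$ from $h\circ g$ to $id_X$. Post-composing with the continuous map $f$ produces $f\circ H$, which is again a homotopy: it is the composite of the $(NP_1(c_1,c_1),c_1)$-continuous map $H$ with the $c_1$-continuous map $f$, hence $(NP_1(c_1,c_1),c_1)$-continuous, so Theorem \ref{hthm} certifies it as a homotopy from $f\circ(h\circ g)$ to $f\circ id_X=f$. Therefore $f\circ h\circ g\simeq f$, and Theorem \ref{bhithm} yields $\bar L(f\circ h\circ g)=\bar L(f)$. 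Chaining this with the previous display gives $\bar L(g\circ f\circ h)=\bar L(f)$, as required.

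The argument is short and poses no genuine obstacle; it is essentially the classical "conjugation invariance" of the Lefschetz number transported to the digital setting. The only points demanding care are verifying that pre/post-composition with a fixed continuous map preserves digital homotopy (handled via Theorem \ref{hthm} as above) and keeping track of the dimension hypotheses under which Theorems \ref{commutative} and \ref{bhithm} are valid, so that both $\bar L(f)$ and $\bar L(g\circ f\circ h)$ are legitimately defined.
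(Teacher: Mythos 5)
Your proof is correct and takes essentially the same route as the paper's: both reduce $\bar L(g\circ f\circ h)$ to $\bar L(f)$ by combining the commutativity property (Theorem \ref{commutative}) with homotopy invariance (Theorem \ref{bhithm}), differing only in which cyclic regrouping gets collapsed ($f\circ h\circ g\simeq f$ in yours versus $h\circ g\circ f\simeq f$ in the paper's). Your extra care---justifying via Theorem \ref{hthm} that post-composition with the continuous map $f$ preserves digital homotopy, and flagging the dimension hypotheses under which Theorem \ref{bhithm} and the definition of $\bar L$ apply---tightens details the paper's proof leaves implicit.
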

\begin{proof}
	Since $g : X \longrightarrow Y$ is digitally homotopy equivalence with homotopy inverse $h : Y \longrightarrow X$. By Theorem \ref{equitype}, we have $h \circ g \simeq id_X$ and $g \circ h \simeq id_Y.$ Now, by Theorem \ref{commutative}, we obtain
	$h \circ g \simeq id_X$ and $h \circ g \circ f \simeq id_X \circ f \simeq f.$ Hence, we obtain $\bar L(h \circ g \circ f) = \bar L(g \circ f \circ h) = \bar L(f).$
\end{proof}
The proof of the following result is similar to that of the preceding theorem, hence we omitted it.
\begin{theorem}\label{homostrngequiv}
	Let $X$ and $Y$ be digital images. Let $f : X \longrightarrow X$ be a mapping and $g : X \longrightarrow Y$ be a digitaly strong homotopy equivalence with strong homotopy inverse $h : Y \longrightarrow X$. Then $L(f) = L(g \circ f \circ h).$
\end{theorem}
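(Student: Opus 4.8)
The plan is to mirror the proof of Theorem \ref{homoequiv}, replacing the cubical Lefschetz number by the simplicial one and, correspondingly, ordinary homotopy invariance by the strong homotopy invariance supplied by Theorem \ref{shithm}. First I would unpack the hypothesis: since $g$ is a strong homotopy equivalence with strong homotopy inverse $h$, Definition \ref{strng_equitype} gives $h \circ g \simeq^{*} id_X$ (the companion relation $g \circ h \simeq^{*} id_Y$ will not be needed).

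The key intermediate step is to promote this to $h \circ g \circ f \simeq^{*} f$, that is, to show that strong homotopy is preserved under precomposition by the fixed map $f$. Concretely, if $H^{*} : X \times \lbrack 0,m \rbrack_{\Z} \longrightarrow X$ is a strong homotopy from $h\circ g$ to $id_X$, I would set $K(x,t) = H^{*}(f(x),t)$, so that $K$ interpolates from $h\circ g\circ f$ to $f$ and factors as $K = H^{*}\circ(f\times id)$. To see that $K$ is again a strong homotopy I must check that $f \times id$ is $(NP_2(\kappa,c_1),\, NP_2(\kappa,c_1))$-continuous: whenever two points of $X \times \lbrack 0,m \rbrack_{\Z}$ agree in all but at most two coordinates (being adjacent in those), applying $f$ in the first slot and the identity in the second can only preserve or collapse adjacencies, so the image pair still lies within $NP_2$-adjacency. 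Composing with the $(NP_2(\kappa,c_1),\kappa)$-continuous map $H^{*}$ then yields the required continuity of $K$. This is the only place where the combinatorial definition of strong homotopy must be touched directly, and I expect it to be the main (though routine) obstacle.

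With $h \circ g \circ f \simeq^{*} f$ established, Theorem \ref{shithm} immediately gives $L(h\circ g\circ f) = L(f)$. Finally I would invoke the commutativity result, Theorem \ref{commutative}, applied to the pair $g\circ f : X \longrightarrow Y$ and $h : Y \longrightarrow X$, which yields $L\bigl((g\circ f)\circ h\bigr) = L\bigl(h\circ(g\circ f)\bigr) = L(h\circ g\circ f)$. Chaining the two equalities produces $L(g\circ f\circ h) = L(f)$, as required. Since every ingredient (strong homotopy invariance and commutativity of the trace) is the simplicial counterpart of what was used for $\bar L$, the argument is genuinely parallel to that of Theorem \ref{homoequiv}, which is presumably why the authors chose to omit it.
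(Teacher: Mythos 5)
Your proposal is correct and follows essentially the same route as the paper: the authors omit the proof, stating it is ``similar to that of the preceding theorem'' (Theorem \ref{homoequiv}), which is exactly the chain $h \circ g \simeq^{*} id_X$, hence $h \circ g \circ f \simeq^{*} f$, then strong homotopy invariance (Theorem \ref{shithm}) plus commutativity (Theorem \ref{commutative}) applied to $g \circ f$ and $h$. Your explicit verification that precomposition with $f$ preserves strong homotopy (via the $(NP_2, NP_2)$-continuity of $f \times id$) is a correct filling-in of a detail the paper leaves tacit, not a departure from its argument.
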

The following is similar to the idea of ``image thinning" in digital topology. This has a nice interpretation in terms of digital ``thinning" operations.
\begin{definition}
	A digital image $Y$ is said to be a ``(strong) reduction" of a digital image $X$ if the images $X$ and $Y$ are (strong) homotopy equivalent and $\# Y < \# X.$ If there is no (strong) reduction of cardinality less than $Y$, we say $Y$ is a ``minimal (strong) reduction.''
\end{definition}
%
Note that, any digital image $X$ has a minimal reduction $X'$ and a minimal strong reduction $X^*$, and we easily have the following relation:
$$\#X' \leq \#X^* \leq \#X.$$
Proposition \ref{homostrngequiv} implies that computing $L(f)$ on $X$ is equivalent to computing $L(f)$ on $X^*$. Similarly, Proposition \ref{homoequiv} implies that computing $\bar L(f)$ on $X$ is equivalent to computing $\bar L(f)$ on $X'.$ This is interesting, as it is easier to compute $L(f)$ and $\bar L(f)$ on smaller spaces (i.e $X^*$ and $X'$ respectively) than on the bigger space $X$.
\begin{example}\label{ex5}
	Let $X$ be the image in Figure $\ref{robot}$ and $f$ be a self-map on $X$. Then 
	$$\bar L(f) \in \{0,1,2\}.$$ 
	This is true by Proposition \ref{prop} because the minimal reduction of $X$ (i.e. $X'$) is a digital cycle. Whereas, the minimal strong reduction of $X$ (i.e. $X^*$) is not a digital cycle; since it has $3$ loops. Therefore, it is possible that there is some map $f: X \longrightarrow X$ with $L(f) \not\in \{0,1,2\}$. In fact, taking the self map $f$ to be the identity map, we get $L(id) = \chi(X) = -2.$ 
\end{example}

\subsection{Spectrum of the Lefschetz Numbers}
Recall that, the concept of fixed point spectrum of an image $X$ that gives the set of all numbers that can appear as the number of fixed points for some continuous self-map was introduced recently as follows:
\begin{definition}\cite{BoxerSta19fixed}
	Let $X$ be a digital image. Then the fixed point spectrum of $X$ is defined as:
	\[F(X) = \{\# \textup{Fix}(f) \, | \, f : X \longrightarrow X \mbox{ is continuous}\}.\]
\end{definition}
But it turns out that the computation of the fixed point spectrum for an image $X$ is difficult. This is because not much is known about how the set $F(X)$ changes when we change the image $X.$ One of the only few things we know is that, whenever $A$ is a retract of an image $X$ we then have $F(A) \subseteq F(X)$.

Now, we will present the definitions of both the spectrum of simplicial Lefschetz number which we denote as $\mathcal L(X)$ and the spectrum of cubical Lefschetz number denoted by $\mathcal{\bar L}(X)$ as in the following:
\begin{definition}
	Let $X$ be a digital image. We define the spectrum of simplicial Lefschetz number and the spectrum of cubical Lefschetz number as:
	\[\mathcal L(X) = \{L(f) : f \mbox{ is a continuous self-map on } X\}\] and \[\mathcal{\bar L}(X) = \{\bar L(f) : f \mbox{ is a continuous self-map on } X\}\]
	respectively.
\end{definition}
\begin{example}
	Let $X$ be the image in Figure \ref{robot} and $f$ be a self-map on $X$. Then $$\mathcal{\bar L}(X) = \{0,1,2\}.$$ This is true from Proposition \ref{prop} and Example \ref{ex5}.
\end{example}
By the commutativity property, we have
\begin{proposition}
	The spectrum of the cubical Lefschetz number is a homotopy-type invariant.
\end{proposition}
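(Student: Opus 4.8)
The plan is to deduce this directly from the commutativity property (Theorem \ref{commutative}), or more conveniently from its packaged consequence Theorem \ref{homoequiv}. First I would pin down what ``homotopy-type invariant'' should mean for a set-valued quantity: if $X$ and $Y$ (both with $c_1$-adjacency, as required for $\bar L$ to be defined) have the same homotopy type, then $\mathcal{\bar L}(X) = \mathcal{\bar L}(Y)$. So I would fix a homotopy equivalence $g : X \longrightarrow Y$ with homotopy inverse $h : Y \longrightarrow X$, so that (via Definition \ref{equitype}) $h \circ g \simeq id_X$ and $g \circ h \simeq id_Y$.

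To establish the inclusion $\mathcal{\bar L}(X) \subseteq \mathcal{\bar L}(Y)$, I would take an arbitrary element $\ell \in \mathcal{\bar L}(X)$, say $\ell = \bar L(f)$ for some continuous self-map $f : X \longrightarrow X$, and transport $f$ across the equivalence by setting $\tilde f = g \circ f \circ h : Y \longrightarrow Y$. Since $\tilde f$ is a continuous self-map on $Y$, its value $\bar L(\tilde f)$ belongs to $\mathcal{\bar L}(Y)$; and Theorem \ref{homoequiv} gives exactly $\bar L(g \circ f \circ h) = \bar L(f) = \ell$, so $\ell \in \mathcal{\bar L}(Y)$. If I preferred to argue from scratch rather than cite Theorem \ref{homoequiv}, I would instead write $\bar L(g \circ f \circ h) = \bar L(f \circ (h \circ g))$ using commutativity of the trace (Theorem \ref{commutative}), then replace $h \circ g$ by $id_X$ using the homotopy invariance of $\bar L$ (Theorem \ref{bhithm}), reaching $\bar L(f)$. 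For the reverse inclusion I would simply swap the roles of $X$ and $Y$: now $h$ plays the part of the equivalence and $g$ its inverse, and the same computation sends each $\bar L(f') \in \mathcal{\bar L}(Y)$ to $\bar L(h \circ f' \circ g) = \bar L(f') \in \mathcal{\bar L}(X)$. Combining the two inclusions yields $\mathcal{\bar L}(X) = \mathcal{\bar L}(Y)$, which is the asserted invariance.

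I do not expect a genuine obstacle, since the essential content already lives in Theorems \ref{commutative} and \ref{homoequiv}; the points that require care are bookkeeping ones. The first is dimensional: $\bar L$ is only defined for $X, Y \subset \Z^n$ with $c_1$-adjacency and $n \leq 4$, and the homotopy-invariance input (Theorem \ref{bhithm}) needs $n \leq 3$, so the statement must be read within that class of images. The second is the logical structure of the argument: I must verify that $\tilde f = g \circ f \circ h$ genuinely is a self-map of $Y$ (it is, by tracking the domains $h : Y \to X$, $f : X \to X$, $g : X \to Y$), and, crucially, that the transport map is applied in \emph{both} directions, so that one obtains equality of the two spectra rather than a one-sided containment. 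No claim of bijectivity on the level of maps is needed—only that every achieved Lefschetz value on one side is achieved on the other—so the argument is complete once both inclusions are in hand.
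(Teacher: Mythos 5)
Your proposal is correct and follows essentially the same route as the paper, which likewise deduces the invariance from Theorem \ref{homoequiv} (itself resting on Theorems \ref{commutative} and \ref{bhithm}) and Definition \ref{equitype}. You simply spell out what the paper's very terse proof leaves implicit---transporting each self-map $f$ to $g \circ f \circ h$ and arguing both inclusions symmetrically---and your added care about the dimension restrictions ($n \leq 4$ for $\bar L$, $n \leq 3$ for homotopy invariance) is a point the paper glosses over.
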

\begin{proof}
	By Theorem \ref{bhithm}, \ref{homoequiv} and Definition \ref{equitype}, we have $\bar L(f) = \bar L(g)$ whenever $f$ and $g$ are homotopy-type equivalences. Hence the result follows as $f$ and $g$ are arbitrary. 
\end{proof}
\begin{proposition}
	The spectrum of the simplicial Lefschetz number is a strong homotopy-type invariant.
\end{proposition}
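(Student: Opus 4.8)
The plan is to mirror the proof of the preceding proposition (homotopy-type invariance of $\mathcal{\bar L}$), but replacing the cubical ingredients with their simplicial and \emph{strong} counterparts. Concretely, ``strong homotopy-type invariant'' should be read as the assertion that if $X$ and $Y$ have the same strong homotopy type, then $\mathcal L(X) = \mathcal L(Y)$. So I would fix a strong homotopy equivalence $g : X \longrightarrow Y$ with strong homotopy inverse $h : Y \longrightarrow X$ (these exist by Definition \ref{strng_equitype}), and prove the set equality $\mathcal L(X) = \mathcal L(Y)$ by double inclusion. The only nontrivial input is Theorem \ref{homostrngequiv}, which states that $L(f) = L(g\circ f\circ h)$ for any self-map $f$ of $X$; note that, unlike the cubical Theorem \ref{homoequiv}, this carries no $c_1$-adjacency restriction, so the argument here is completely general and in fact slightly cleaner than its cubical analogue.

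For the forward inclusion $\mathcal L(X)\subseteq \mathcal L(Y)$, I would take an arbitrary continuous self-map $f : X \longrightarrow X$ and consider the self-map $g\circ f\circ h : Y \longrightarrow Y$. By Theorem \ref{homostrngequiv} we have $L(f) = L(g\circ f\circ h)$, and since $g\circ f\circ h$ is a self-map of $Y$, its Lefschetz number lies in $\mathcal L(Y)$ by definition of the spectrum; hence $L(f)\in \mathcal L(Y)$. As $f$ was arbitrary, this gives $\mathcal L(X)\subseteq \mathcal L(Y)$. The reverse inclusion $\mathcal L(Y)\subseteq \mathcal L(X)$ follows by exactly the symmetric argument: Definition \ref{strng_equitype} is symmetric in $X$ and $Y$, so $h$ is itself a strong homotopy equivalence with strong homotopy inverse $g$, and applying Theorem \ref{homostrngequiv} to an arbitrary self-map $f' : Y \longrightarrow Y$ yields $L(f') = L(h\circ f'\circ g)\in \mathcal L(X)$. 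Combining the two inclusions gives $\mathcal L(X) = \mathcal L(Y)$, which is the desired invariance.

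I do not expect a genuine obstacle here, since the result is essentially a formal consequence of Theorem \ref{homostrngequiv} together with the symmetry of the strong-homotopy-equivalence relation; the one point that deserves a line of verification is precisely that symmetry, namely that $h$ qualifies as a strong homotopy equivalence with inverse $g$, which is immediate from Definition \ref{strng_equitype}. (Theorem \ref{shithm} is the underlying reason Theorem \ref{homostrngequiv} holds, but in the present write-up I would invoke Theorem \ref{homostrngequiv} directly rather than re-deriving it.) The mild care needed is only to keep track of domains and codomains so that each composite is a genuine self-map whose Lefschetz number belongs to the appropriate spectrum.
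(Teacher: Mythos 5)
Your proposal is correct and takes essentially the approach the paper intends: the paper states this proposition without proof, as the direct analogue of the one-line cubical proof, which likewise rests on applying Theorem \ref{homostrngequiv} to arbitrary continuous self-maps together with the symmetry of strong homotopy equivalence (Definition \ref{strng_equitype}). Your double-inclusion write-up merely spells out carefully what the paper's cubical argument gestures at, so there is nothing to correct.
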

Unlike the fixed point spectrum which is not a homotopy-type invariant. We obtain that the spectrum of cubical Lefschetz number is homotopy-type invariant while that of simplicial Lefschetz number is strong homotopy-type invariant. That is, if $X$ and $Y$ are homotopy equivalent, then $\bar{\mathcal L}(X) = \bar{\mathcal L}(Y)$ and if $X$ and $Y$ are strong homotopy equivalent, then $\mathcal L(X) = \mathcal L(Y).$ respectively. This is particularly more interesting fact, because it means in general we can reduce an image by homotopy equivalence or strong homotopy equivalence to obtain a much simpler image without changing the Lefschetz theory.
\begin{remark}
	The cardinality of an image $X$ is an upper bound for both simplicial and cubical Lefschetz spectrum. (i.e. $m \leq \#X$ for all $m \in \mathcal{L}(X)$ or $m \in \mathcal{\bar L}(X)$). Moreover, $- \#X \leq m$ for all $m \in \mathcal{L}(X)$ or $m \in \mathcal{\bar L}(X)$, i.e. The negative cardinality of an image $X$ is a lower bound for both simplicial and cubical Lefschetz spectrum. Hence, $\mathcal L(f), \bar{\mathcal L}(X) \subseteq \{-\#X, \ldots, \#X\}.$ This follows from Corollary \ref{cor_supbd} and \ref{cor_cupbd} respectively.
\end{remark}

\subsection{Relation between the Lefschetz Numbers, $S_{a^n}(f)$ and $S_a^*(f)$.}
In this part, we will introduce briefly a relationship between the defined Lefschetz numbers with the following concepts. This was motivated by the notion of homotopy fixed point spectrum in \cite{BoxerSta19fixed}.
\begin{definition}
	Let $f,g : X \longrightarrow X$ be any continuous maps on $X$. Then 
	\begin{itemize}
		\item The homotopy $n$-approximate fixed point spectrum of $f$ is denoted and defined as:
		\[S_{a^n}(f) = \{\nAFix{n}(g) \mid g \simeq f \} \subseteq \{0, 1,  \ldots ,\#X\},\]
		\item The strong homotopy approximate fixed point spectrum of $f$ is denoted and defined as:
		\[S_a^*(f) = \{\AFix(g) \mid g \simeq^* f \} \subseteq \{0, 1,  \ldots ,\#X\}.\]
	\end{itemize}
	where $\AFix(g)$ and $\nAFix{n}(g)$ denote the sets of approximate fixed points, and $n$-approximate fixed points, respectively.
\end{definition}
\begin{remark}\hfill
	\begin{itemize}
		\item $S_{a^n}(f)$ is a homotopy invariant for any continuous map $f$ on $X$.
		\item $S_a^*(f)$ is a strong homotopy invariant for any continuous map $f$ on $X$.
	\end{itemize}
\end{remark}
\begin{proposition}
	Let $X \subset \mathbb{Z}^n$ be a digital image with $c_1$-adjacency and $n \leq 3$. If $S_{a^n}(f)$ is the homotopy $n$-approximate fixed point spectrum of $X$, then $|\bar L(f)|$ is a lower bound for $S_{a^n}(f).$
\end{proposition}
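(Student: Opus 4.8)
The plan is to combine the homotopy invariance of the cubical Lefschetz number (Theorem \ref{bhithm}) with the counting estimate of Theorem \ref{no_nafpt}. Recall that by definition $S_{a^n}(f)$ collects the numbers $\nAFix{n}(g)$ as $g$ ranges over all continuous maps homotopic to $f$, and that asserting $|\bar L(f)|$ is a lower bound for this set means exactly showing $|\bar L(f)| \le m$ for every $m \in S_{a^n}(f)$. Since the hypothesis gives $n \le 3$ and hence $n \le 4$, the image $X$ lies in the regime where both Theorem \ref{bhithm} and Theorem \ref{no_nafpt} are available, so the cubical Lefschetz number is defined and is a homotopy invariant.

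First I would fix an arbitrary element $m \in S_{a^n}(f)$, written as $m = \nAFix{n}(g)$ for some continuous $g$ with $g \simeq f$. By the homotopy invariance of $\bar L$ (Theorem \ref{bhithm}) we have $\bar L(g) = \bar L(f)$, and in particular $|\bar L(g)| = |\bar L(f)|$. Applying Theorem \ref{no_nafpt} to the single map $g$ then yields that $g$ possesses at least $|\bar L(g)|$ distinct $n$-approximate fixed points; that is, $\nAFix{n}(g) \ge |\bar L(g)| = |\bar L(f)|$, whence $m \ge |\bar L(f)|$.

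Since $m \in S_{a^n}(f)$ was arbitrary, every element of $S_{a^n}(f)$ is at least $|\bar L(f)|$, which is precisely the claim that $|\bar L(f)|$ is a lower bound for $S_{a^n}(f)$. The argument is an assembly of the two cited results, so I do not expect a serious technical obstacle; the one conceptual point worth emphasising is that it is exactly the homotopy invariance of $\bar L$ that upgrades the per-map estimate of Theorem \ref{no_nafpt} into a bound holding uniformly across the entire homotopy class of $f$, which is what is needed to control the whole spectrum $S_{a^n}(f)$ rather than just a single representative.
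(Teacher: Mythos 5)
Your proposal is correct and follows essentially the same route as the paper: the paper's proof simply cites Theorem \ref{no_nafpt}, noting that $|\bar L(f)|$ is the least number of $n$-approximate fixed points any map in the class can attain, with the homotopy invariance of $\bar L$ (Theorem \ref{bhithm}) left implicit. You merely make that implicit step explicit by fixing $m = \nAFix{n}(g)$ for $g \simeq f$ and invoking $\bar L(g) = \bar L(f)$, which is a faithful (and slightly more careful) write-up of the paper's argument.
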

\begin{proof}
	This follows from Theorem \ref{no_nafpt}, since $|\bar L(f)|$ is the least number of $n$-approximate fixed point that any continuous map $f$ on $X$ can attain.
\end{proof}
Similar statements also hold concerning $L(f)$ and the strong homotopy spectrum of approximate fixed points $S_a^*(f)$.

\section{Conclusion}
In this paper, we utilized the classical simplicial and cubical homology theories and defined (in the usual way) the simplicial Lefschetz number and the cubical Lefschetz number. We showed that the simplicial Lefschetz number $L(f)$ is strong homotopy invariant and has a $1$-approximate fixed point theorem. While the cubical Lefschetz number $\bar L(f)$ is homotopy invariant and has an  $n$-approximate fixed point theorem.

Consequently, we can conclude that the fixed point theorem for $L(f)$ seems better, but has a worse homotopy invariance as it requires a stronger homotopy. On the other hand, the fixed point theorem for the cubical Lefschetz number $\bar L(f)$ is worse because it may fail to provide us with a fixed or approximate fixed point, but we found out that it has a better homotopy invariance property.

We have also demonstrated with some illustrative examples that the two Lefschetz numbers are essentially different. In particular, we have shown an example where $L(f)$ is zero while $\bar L(f)$ is nonzero. We believed that, these differences, advantages and disadvantages would demonstrate the usefulness of both Lefschetz numbers in homology of digital images, for instance in detecting fixed or approximate fixed points.

Moreover, we have discussed the commutativity property of the Lefschetz numbers, which implies that the Lefschetz numbers are invariant under (strong) homotopy equivalence. Also, we have highlighted its possible applications to image thinning in digital topology. We also introduced an invariant; namely "the spectrum of Lefschetz number" to help us find all the possible values of the Lefschetz numbers for all self-maps on a given digital image.

Finally, we presented a nice relation between the digital Lefschetz numbers with the number of approximate fixed points. We showed that $|\bar L(f)|$ is a lower bound to the number of $n$-approximate fixed points, $|L(f)|$ is a lower bound to the number of approximate fixed points and $\#X$ is an upper bound to both Lefschetz spectrum.


%

\subsection*{Acknowledgments}
The authors acknowledge financial support provided by the Center of Excellence in Theoretical and Computational Science (TaCS-CoE), KMUTT. The first author was supported by the ``Petchra Pra Jom Klao Ph.D. Research Scholarship from King Mongkut's University of Technology Thonburi" (Grant No.: 35/2017).

\bibliographystyle{plain} 
\bibliography{reference}         

\vspace{0.5cm}

\end{document}